\newtheorem{theorem}{Theorem}[section]
\newtheorem{lemma}[theorem]{Lemma}
\newtheorem{definition}[theorem]{Definition}
\newtheorem{example}[theorem]{Example}
\newtheorem{proposition}[theorem]{Proposition}
\newtheorem{remark}[theorem]{Remark}
\numberwithin{equation}{section}
\def\Z{{\mathbb{Z}}}
\def\Q{{\mathbb{Q}}}
\def\R{{\mathbb{R}}}
\def\C{{\mathbb{C}}}
\def\N{{\mathbb{N}}}
\def\x{{\mathbf{x}}}
\def\a{{\boldsymbol{\alpha}}}
\def\C{{\boldsymbol{\gamma}}}
\def\b{{\boldsymbol{\beta}}}
\def\be{{\boldsymbol{e}}}
\def\A{{\mathscr{A}}}
\def\B{{\mathscr{B}}}
\def\supp{\hbox{\rm{supp}}}
\def\int{\hbox{\rm{int}}}
\def\New{\hbox{\rm{New}}}
\def\Conv{\hbox{\rm{conv}}}
\def\exp{\hbox{\rm{exp}}}
\def\diag{\hbox{\rm{diag}}}
\def\GL{\hbox{\rm{GL}}}
\begin{document}

\title{Systems of Polynomials with At Least One Positive Real Zero}
\thanks{This work was supported partly by NSFC under grants 61732001 and 61532019.}
\author{Jie Wang}
\address{Jie Wang\\ School of Mathematical Sciences, Peking University}
\email{wangjie212@mails.ucas.ac.cn}
\subjclass[2010]{Primary, 12D10,13J30; Secondary, 14P10,90C30}
\keywords{existence of positive real zeros, existence of global minimizers, multivariate Descartes' rule of signs, coercive polynomial, Birch's theorem}
\date{\today}

\begin{abstract}
In this paper, we prove several theorems on systems of polynomials with at least one positive real zero based on the theory of conceive polynomials. These theorems provide sufficient conditions for systems of multivariate polynomials admitting at least one positive real zero in terms of their Newton polytopes and combinatorial structure. Moreover, a class of polynomials attaining their global minimums in the first quadrant are given, which is useful in polynomial optimization.
\end{abstract}

\maketitle
\bibliographystyle{amsplain}

\section{Introduction}
An important problem in real algebraic geometry is bounding the numbers of positive real zeros for polynomials or systems of polynomials. In the univariate case, the well-known Descartes' rule of signs gives a nice upper bound for the number of positive real zeros of a polynomial.

{\bf Descartes' rule of signs} Given a univariate real polynomial $f(x)$ such that the terms of $f(x)$ are ordered by descending variable exponents, the number of positive real zeros of $f(x)$ (counted with multiplicity) is bounded from above by the number of sign variations between consecutive nonzero coefficients. Additionally, the difference between these two numbers (the number of positive real zeros and the number of sign variations) is even.

However, no complete multivariate generalization of Descartes' rule of signs for upper bounds of numbers of positive real zeros is known, except for a conjecture proposed by Itenberg and Roy in 1996 (\cite{it}) and subsequently disproven by T.Y. Li in 1998 (\cite{li}). In \cite{mu}, a special case for systems of polynomials with at most one positive real zero was considered through the theory of oriented matroids. Based on this method, a partially multivariate generalization of Descartes' rule of signs for systems of polynomials supported on circuits can be found in \cite{bi,bi2}.

While lower bounds guarantee the existence of positive real zeros which has applications in fields such as polynomial optimization (the existence of global minimizers, \cite{ba,nie,sc}), chemical reaction networks (the existence of positive steady states, \cite{jo,mu}) and so on, there are few results on lower bounds of numbers of positive real zeros for polynomials or systems of polynomials in the literature. In \cite{so}, a lower bound of numbers of real zeros for systems of Wronski polynomials was derived.

In this paper, we concentrate on the case of lower bound one for the number of positive real zeros. More specifically, we prove several theorems on systems of polynomials with at least one positive real zero (Theorem \ref{thm5}, Theorem \ref{thm2} and Theorem \ref{thm7}), based on the theory of conceive polynomials developed in \cite{ba}. These theorems provide sufficient conditions for systems of polynomials admitting at least one positive real zero in terms of their Newton polytopes and combinatorial structure and can be also seen as a first step to the broader problem of bounding the number of positive real zeros from the below for systems of polynomials.

In polynomial optimization problems, the existence of global minimizers of objective polynomials is often formulated as an assumption for some of the algorithmic approaches (\cite{ba,nie,sc}). However, this assumption is very non-trivial. To verify that a given polynomial has this property is a difficult problem. As another contribution of this paper, we give a class of polynomials which attain their global minimums in the first quadrant (Theorem \ref{thm4}).

\section{Preliminaries}
\subsection{Nonnegative polynomials}
Let $\R[\x]=\R[x_1,\ldots,x_n]$ be the ring of real $n$-variate polynomial, and $\N^*=\N\backslash\{0\}$. Let $\R_+$ be the set of positive real numbers. For a finite set $\A\subseteq\N^n$, we denote by $\Conv(\A)$ the convex hull of $\A$, and by $V(\A)$ the vertices of the convex hull of $\A$. We also denote by $V(P)$ the vertex set of a polytope $P$. For a polynomial $f\in\R[\x]$ of the form $f(\x)=\sum_{\a\in \A}c_{\a}\x^{\a}$ with $c_{\a}\in\R, \x^{\a}=x_1^{\alpha_1}\cdots x_n^{\alpha_n}$, the support of $f$ is $\supp(f):=\{\a\in \A\mid c_{\a}\ne0\}$ and the Newton polytope of $f$ is defined as $\New(f):=\Conv(\supp(f))$. For a polytope $P$, we use $P^{\circ}$ to denote the interior of $P$.

We say that a polynomial $f\in\R[\x]$ is {\em nonnegative} over $A$ for $A\subseteq\R^n$, if for all $\x\in A$, $f(\x)\ge0$. Particularly, a polynomial $f\in\R[\x]$ is nonnegative over $\R^n$ is called a {\em nonnegative polynomial}. A nonnegative polynomial must satisfy the following necessary conditions.
\begin{proposition}(\cite[Theorem 3.6]{re})\label{nc-prop2}
Let $\A\subset\N^n$ and $f=\sum_{\a\in \A}c_{\a}\x^{\a}\in\R[\x]$ with $\supp(f)=\A$. Then $f$ is a nonnegative polynomial only if the followings hold:
\begin{enumerate}[(1)]
  \item $V(\A)\subseteq(2\N)^n$;
  \item If $\a\in V(\A)$, then the corresponding coefficient $c_{\a}$ is positive.
\end{enumerate}
\end{proposition}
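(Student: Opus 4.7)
The plan is to probe $f$ along carefully chosen curves that isolate a single vertex's contribution. Fix $\a \in V(\A)$. Because $\a$ is a vertex of $\Conv(\A)$ and $\A$ is finite, I can find $\bv \in \Z^n$ such that $\langle \a, \bv \rangle > \langle \b, \bv \rangle$ for every $\b \in \A \setminus \{\a\}$: a rational separating hyperplane exists by standard convex geometry, and clearing denominators makes its normal integral.

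Next, I would evaluate $f$ at the point $(s_1 t^{v_1}, \ldots, s_n t^{v_n})$ for $t > 0$ and each sign pattern $s = (s_1, \ldots, s_n) \in \{-1, +1\}^n$, yielding
$$ f(s_1 t^{v_1}, \ldots, s_n t^{v_n}) = \sum_{\b \in \A} c_{\b}\, s_1^{\beta_1} \cdots s_n^{\beta_n}\, t^{\langle \b, \bv \rangle}. $$
After dividing by $t^{\langle \a, \bv \rangle}$ and sending $t \to \infty$, every term except the one indexed by $\a$ drops out, leaving the limit $c_{\a} s_1^{\alpha_1} \cdots s_n^{\alpha_n}$. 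Since $f \geq 0$ on $\R^n$, this limit must be nonnegative for every sign pattern $s$.

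Choosing $s = (1, \ldots, 1)$ yields $c_{\a} \geq 0$; combined with $\a \in \supp(f)$, this forces $c_{\a} > 0$, which is condition (2). With $c_{\a} > 0$ in hand, the inequality $c_{\a} s_1^{\alpha_1} \cdots s_n^{\alpha_n} \geq 0$ must hold for every $s \in \{-1, +1\}^n$; flipping a single coordinate $s_i$ multiplies the product by $(-1)^{\alpha_i}$, so requiring nonnegativity for all sign patterns forces each $\alpha_i$ to be even, giving condition (1).

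I expect the only genuinely delicate step to be the construction of the integer weight $\bv$, where one has to invoke the finiteness of $\A$ to upgrade a separating functional to one with rational (hence integral) coordinates; the subsequent asymptotic comparison of monomials is a straightforward dominant-term analysis, and separating the two conclusions via the choice of $s$ is the conceptual pay-off.
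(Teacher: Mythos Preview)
Your argument is correct. The paper itself does not supply a proof of this proposition: it is quoted from Reznick's paper \cite{re} as a known necessary condition, so there is no in-paper proof to compare against. The curve-substitution technique you outline---choosing a linear functional $\bv$ that exposes the vertex $\a$, substituting $x_i = s_i t^{v_i}$, and reading off the dominant coefficient as $t\to\infty$---is the standard route and is essentially Reznick's own argument. One minor simplification: you do not actually need $\bv\in\Z^n$, since $t^{v_i}$ is well defined for $t>0$ and arbitrary real $v_i$; any $\bv\in\R^n$ strictly separating $\a$ from $\A\setminus\{\a\}$ suffices, and such a $\bv$ exists because the set of exposing functionals at a vertex of a polytope is open and nonempty.
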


\subsection{Circuit polynomials}
\begin{definition}
Let $\A\subseteq(2\N)^n$ and $f\in\R[\x]$. Then $f$ is called a {\em circuit polynomial} if it is of the form
\begin{equation*}
f(\x)=\sum_{\a\in\A}c_{\a}\x^{\a}-d\x^{\b},
\end{equation*}
and satisfies:
\begin{enumerate}[(1)]
  \item $\A$ comprises the vertices of a simplex;
  \item $c_{\a}>0$ for $\a\in\A$;
  \item $\b\in\Conv(\A)^{\circ}$.
\end{enumerate}
\end{definition}

For a circuit polynomial $f=\sum_{\a\in\A}c_{\a}\x^{\a}-d\x^{\b}$, assume
\begin{equation*}
\b=\sum_{\a\in\A}\lambda_{\a}\a\textrm{ with } \lambda_{\a}>0 \textrm{ and } \sum_{\a\in\A}\lambda_{\a}=1,
\end{equation*}
and define the corresponding {\em circuit number} as $\Theta_f:=\prod_{\a\in\A}(c_{\a}/\lambda_{\a})^{\lambda_{\a}}$. The nonnegativity of a circuit polynomial $f$ is decided by its circuit number alone.
\begin{theorem}(\cite[Theorem 3.8]{iw})\label{nc-thm1}
Let $f=\sum_{\a\in\A}c_{\a}\x^{\a}-d\x^{\b}\in\R[\x]$ be a circuit polynomial and $\Theta_f$ its circuit number. Then $f$ is a nonnegative polynomial if and only if $\b\notin(2\N)^n$ and $|d|\le\Theta_f$, or $\b\in(2\N)^n$ and $d\le\Theta_f$.
\end{theorem}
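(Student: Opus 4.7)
The plan is to reduce nonnegativity to the weighted arithmetic-geometric mean (AM-GM) inequality applied on the positive orthant, then extend the conclusion to all of $\R^n$ via the evenness conditions on $\A$ and $\b$. Since $\A \subseteq (2\N)^n$, for every $\x \in \R^n$ and every $\a \in \A$ we have $\x^{\a} = |\x|^{\a}$, where $|\x| := (|x_1|,\ldots,|x_n|)$. Hence
\begin{equation*}
f(\x) \;=\; \sum_{\a \in \A} c_{\a} |\x|^{\a} - d\, \x^{\b},
\end{equation*}
and the problem reduces to lower-bounding the positive-orthant expression $\sum_{\a} c_{\a} |\x|^{\a}$ in terms of $|\x|^{\b}$.

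The first key step is the weighted AM-GM bound: for any vector of positive reals $(y_{\a})_{\a \in \A}$ and positive weights $\lambda_{\a}$ with $\sum \lambda_{\a}=1$,
\begin{equation*}
\sum_{\a \in \A} \lambda_{\a} y_{\a} \;\ge\; \prod_{\a \in \A} y_{\a}^{\lambda_{\a}}.
\end{equation*}
Setting $y_{\a} = (c_{\a}/\lambda_{\a})\,|\x|^{\a}$ and using $\sum_{\a}\lambda_{\a}\a = \b$ and $\sum_{\a}\lambda_{\a}=1$, this collapses to
\begin{equation*}
\sum_{\a \in \A} c_{\a} |\x|^{\a} \;\ge\; \Theta_f \cdot |\x|^{\b},
\end{equation*}
valid first for $\x$ in the open positive orthant and then for all $\x$ by continuity. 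Combining with $|\x|^{\b} = |\x^{\b}|$ (since $\b \in \N^n$) yields $f(\x) \ge \Theta_f\, |\x^{\b}| - d\, \x^{\b}$. Splitting into the two cases then finishes sufficiency: if $\b \in (2\N)^n$ then $\x^{\b} \ge 0$ so $f(\x) \ge (\Theta_f - d)\x^{\b} \ge 0$ whenever $d \le \Theta_f$; if $\b \notin (2\N)^n$ then $\x^{\b}$ can be negative and $f(\x) \ge (\Theta_f - |d|)|\x^{\b}| \ge 0$ whenever $|d| \le \Theta_f$.

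For necessity, I would exploit the equality case of AM-GM, which holds precisely when all $y_{\a}$ are equal. Solving $(c_{\a}/\lambda_{\a})|\x|^{\a} = \text{const}$ in the positive orthant gives an explicit point $\x^{*}$ (built by taking logs and solving a linear system in the $\log |x_i|$, which is solvable because $\A$ spans an affine simplex containing $\b$ in its interior) at which $\sum_{\a}c_{\a}(\x^{*})^{\a} = \Theta_f \cdot (\x^{*})^{\b}$. If $d > \Theta_f$ (in Case $\b \in (2\N)^n$) then $f(\x^{*}) < 0$; if $|d| > \Theta_f$ (in Case $\b \notin (2\N)^n$) one can further flip the sign of some $x_i$ with $b_i$ odd to make $\x^{\b}$ have the same sign as $d$, again producing $f < 0$.

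The main obstacle is the necessity direction: producing an explicit positive point where AM-GM is tight and verifying the sign-flipping argument in the non-even case. The positivity part itself is essentially the AM-GM inequality repackaged; the serious content is showing the equality locus is nonempty in the open positive orthant (equivalently, that the linear system in $\log|x_i|$ has a real solution), which follows from $\b$ lying in the \emph{interior} of $\Conv(\A)$ together with $\A$ being an affinely independent simplex.
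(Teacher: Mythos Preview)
The paper does not give its own proof of this theorem; it is quoted verbatim as \cite[Theorem 3.8]{iw} and used only as a tool (in the first paragraph of the proof of Lemma~\ref{lm5}). Your AM--GM argument is exactly the standard proof from \cite{iw}, and it is correct in both directions.

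One small remark on your necessity discussion: the solvability of the linear system for $\log|x_i|$ at the AM--GM equality locus comes purely from the affine independence of $\A$ (the vectors $\a-\a_0$, $\a\in\A\setminus\{\a_0\}$, are linearly independent, so the system of $|\A|-1$ equations in $n$ unknowns is consistent). The condition $\b\in\Conv(\A)^{\circ}$ is not needed for that step; it is what guarantees all barycentric weights $\lambda_{\a}$ are strictly positive, which is what you need for the weighted AM--GM inequality to apply in the first place and for $\Theta_f$ to be well-defined. So the two hypotheses play separate roles, not a joint one. Otherwise your write-up is complete: the sign-flip in the case $\b\notin(2\N)^n$ works exactly because every $\a\in\A$ has all even coordinates, so flipping $x_i\mapsto -x_i$ for an index $i$ with $b_i$ odd leaves $\sum_{\a}c_{\a}\x^{\a}$ unchanged while negating $\x^{\b}$.
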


For more details about circuit polynomials, the reader is referred to \cite{iw,wang}.

\subsection{Coercive polynomials}
A polynomial $f\in\R[\x]$ is called a {\em coercive polynomial}, if $f(\x)\rightarrow+\infty$ holds whenever $\left\|\x\right\|\rightarrow+\infty$, where $\left\|\cdot\right\|$ denotes some norm on $\R^n$. Obviously the coercivity of $f$ implies the existence of global minimizers of $f$ over $\R^n$. Necessary conditions (\cite[Theorem 2.8]{ba}) and sufficient conditions (\cite[Theorem 3.4]{ba}) for a polynomial to be coercive were given in \cite{ba}.
\begin{theorem}(\cite[Theorem 2.8]{ba})\label{thm6}
Let $f=\sum_{\a\in \A}c_{\a}\x^{\a}\in\R[\x]$ with $\supp(f)=\A$ be a coercive polynomial and $c_{\mathbf{0}}>0$. Then the following three conditions hold:
\begin{enumerate}[(1)]
  \item $V(\A)\subseteq(2\N)^n$;
  \item If $\a\in V(\A)$, then the corresponding coefficient $c_{\a}$ is positive;
  \item For every $i$, $1\le i\le n$, there exists a vector $2k_i\be_i\in V(\A)$ with $k_i\in\N^*$, where $\be_i$ is the standard basis vector of $\R^n$.
\end{enumerate}
\end{theorem}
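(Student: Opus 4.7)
My plan is to exhibit curves $\x(t)$ with $\|\x(t)\|\to\infty$ along which $f(\x(t))$ is asymptotically dominated by a single monomial, and then read off what coercivity forces on that monomial.

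Given any vertex $\a^*\in V(\A)$, I would first choose an integer vector $\bv\in\Z^n$ such that $\bv\cdot\a^*>\bv\cdot\a$ for every $\a\in\A\setminus\{\a^*\}$; such a $\bv$ exists because $\a^*$ is an extreme point of the finite set $\A$, and we may clear denominators to keep $\bv$ integral. Because $c_{\mathbf{0}}>0$ forces $\mathbf{0}\in\A$ and $\a^*\ne\mathbf{0}$, we obtain $\bv\cdot\a^*>\bv\cdot\mathbf{0}=0$, hence at least one component $v_j$ is strictly positive. Substituting $\x(t)=(t^{v_1},\ldots,t^{v_n})$ with $t>0$ produces a curve satisfying $\|\x(t)\|\to\infty$, and $f(\x(t))=c_{\a^*}t^{\bv\cdot\a^*}+(\text{strictly lower powers of }t)$; coercivity forces the leading coefficient $c_{\a^*}$ to be positive, establishing (2). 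Running the same substitution with $\x(t)=(\sigma_1 t^{v_1},\ldots,\sigma_n t^{v_n})$ for an arbitrary sign pattern $\sigma\in\{\pm 1\}^n$ yields the dominant term $c_{\a^*}\bigl(\prod_j\sigma_j^{\alpha_j^*}\bigr)t^{\bv\cdot\a^*}$; since $c_{\a^*}>0$, coercivity demands $\prod_j\sigma_j^{\alpha_j^*}>0$ for every choice of $\sigma$, which forces each $\alpha_j^*$ to be even, giving (1).

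For (3), I fix $i$ and pick $\bv$ with $v_i=1$ and $v_j=-M$ for $j\ne i$, where $M$ is a large positive integer. Let $d_i:=\max\{k\in\N:k\be_i\in\A\}$, well-defined since $\mathbf{0}\in\A$. If $d_i=0$, then every nonzero $\a\in\A$ has $\sum_{j\ne i}\alpha_j\ge 1$, so $\bv\cdot\a\le\max_{\a'\in\A}\alpha_i'-M<0$ for $M$ large, which makes $f(\x(t))\to c_{\mathbf{0}}$ along an escaping curve and contradicts coercivity. Hence $d_i\ge 1$, and the same estimate shows $d_i\be_i$ is the unique $\bv$-maximizer on $\A$, so $d_i\be_i\in V(\A)$; applying (1) to this vertex gives $d_i=2k_i$ with $k_i\in\N^*$. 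The only delicate point I foresee is keeping the curves genuinely escaping and the sign substitutions well-defined --- insisting on integer $\bv$ handles both, and the positivity of $\bv\cdot\a^*$ (forced by $c_{\mathbf{0}}>0$) supplies the positive component of $\bv$ that drives $\|\x(t)\|\to\infty$. Everything else is routine bookkeeping about which monomial dominates.
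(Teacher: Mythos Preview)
The paper does not give its own proof of this theorem; it is quoted from \cite[Theorem~2.8]{ba} and used as a black box, so there is no argument in the present paper to compare against. Your proof is correct and is essentially the standard monomial-curve argument one would expect for such a result.

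One cosmetic point: when you write that ``$c_{\mathbf{0}}>0$ forces $\mathbf{0}\in\A$ and $\a^*\ne\mathbf{0}$'', the second clause does not follow --- $\mathbf{0}$ can itself be a vertex of $\Conv(\A)$. Conditions (1) and (2) are of course trivially satisfied at $\a^*=\mathbf{0}$ (since $\mathbf{0}\in(2\N)^n$ and $c_{\mathbf{0}}>0$ by hypothesis), so simply note this and then restrict to vertices $\a^*\ne\mathbf{0}$, where your curve argument goes through verbatim. Beyond that, the choice of an integer exposing direction $\bv$ for (1)--(2) and the direction $\bv=\be_i-M\sum_{j\ne i}\be_j$ for (3) are clean and complete; your remark that $\bv\cdot\a^*>\bv\cdot\mathbf{0}=0$ guarantees a positive component of $\bv$, hence an escaping curve, is exactly the point that makes the hypothesis $c_{\mathbf{0}}>0$ do work.
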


For $f=\sum_{\a\in \A}c_{\a}\x^{\a}\in\R[\x]$ with $\supp(f)=\A$, let $\mathscr{F}$ be the set of faces of $\New(f)$ which do not contain the origin point $\mathbf{0}$ and let
\begin{equation}\label{eq9}
D:=\bigcup_{F\in\mathscr{F}}(\A\backslash V(\A))\cap F.
\end{equation}
For the later use, we restate Theorem 3.4 in \cite{ba} for our purpose as follows.
\begin{theorem}(\cite[Theorem 3.4]{ba})\label{thm8}
Let $f=\sum_{\a\in \A}c_{\a}\x^{\a}\in\R[\x]$ with $\supp(f)=\A$ and $c_{\mathbf{0}}>0$. Assume that the necessary conditions in Theorem \ref{thm6} are satisfied and $D$ is as (\ref{eq9}). If for every $\a\in D$, $c_{\a}>0$, then $f$ is a coercive polynomial.
\end{theorem}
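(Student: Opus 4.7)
The plan is to establish coercivity by combining two ingredients: a triangulation of $\New(f)$ together with the AM-GM inequality to bound every non-vertex monomial by a convex combination of vertex monomials; and condition (3) of Theorem \ref{thm6}, which ensures that the surviving vertex sum is itself coercive in every coordinate direction.

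First I would triangulate $\New(f)$ using only vertices from $V(\A)$. For each non-vertex support point $\a\in\A\setminus V(\A)$, this places $\a$ in the relative interior of a simplex $\sigma_{\a}$ with vertex set $T_{\a}\subseteq V(\A)$. The key choice is: if $\a\in D$, so that $\a$ lies on some face $F\in\mathscr{F}$ not containing $\mathbf{0}$, then the triangulation can be refined so that $T_{\a}\subseteq V(F)\subseteq V(\A)\setminus\{\mathbf{0}\}$; for the remaining points $\a\in\A\setminus(V(\A)\cup D)$, the simplex $\sigma_{\a}$ is allowed to use $\mathbf{0}$ as a vertex. Writing $\a=\sum_{\b\in T_{\a}}\lambda_{\a,\b}\b$ with positive $\lambda_{\a,\b}$ summing to $1$, condition (1) of Theorem \ref{thm6} guarantees $T_{\a}\subseteq(2\N)^n$, so each $\x^{\b}\ge 0$ on $\R^n$, and weighted AM-GM yields
\begin{equation*}
|\x^{\a}|=\prod_{\b\in T_{\a}}(\x^{\b})^{\lambda_{\a,\b}}\le\sum_{\b\in T_{\a}}\lambda_{\a,\b}\x^{\b}.
\end{equation*}

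Substituting these bounds into $f$ and regrouping by vertex, I would obtain a pointwise lower bound of the shape
\begin{equation*}
f(\x)\ge c_{\mathbf{0}}+\sum_{\b\in V(\A)\setminus\{\mathbf{0}\}}c_{\b}\x^{\b}-\sum_{\a\in\A\setminus V(\A)}|c_{\a}|\sum_{\b\in T_{\a}}\lambda_{\a,\b}\x^{\b}.
\end{equation*}
The point of the careful triangulation choice is that the constant term $c_{\mathbf{0}}$ only has to absorb contributions from $\a\notin D$, while each $\x^{\b}$ with $\b\in V(\A)\setminus\{\mathbf{0}\}$ receives corrections only from those $\a$ lying on faces through $\b$. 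To guarantee that the residual coefficient of each $\x^{\b}$ remains strictly positive, I would sharpen the AM-GM step by introducing a scaling parameter — effectively, a weighted Young's inequality — that lets an arbitrarily small fraction of each vertex term be used to absorb a given non-vertex monomial. The strict positivity $c_{\a}>0$ on $D$, together with $c_{\mathbf{0}}>0$, furnishes exactly the slack needed to carry out this rescaling.

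Once the lower bound takes the form $f(\x)\ge -C+\sum_{\b\in V(\A)\setminus\{\mathbf{0}\}}\tilde c_{\b}\x^{\b}$ with $\tilde c_{\b}>0$ and $C$ a finite constant, coercivity follows from condition (3) of Theorem \ref{thm6}: the presence of $2k_{i}\be_{i}\in V(\A)$ for every $i$ means the residual vertex sum dominates $\sum_{i=1}^{n}\tilde c_{2k_{i}\be_{i}}x_{i}^{2k_{i}}$, which tends to $+\infty$ as soon as $\|\x\|\to+\infty$. The main obstacle I expect is the bookkeeping in the rescaled AM-GM step, ensuring face by face that after every non-vertex absorption the vertex coefficients stay strictly positive; this is precisely where the hypothesis $c_{\a}>0$ on all of $D$ (rather than a weaker sign or magnitude condition) is essential to close the estimate.
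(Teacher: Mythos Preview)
The paper does not prove this statement; it is quoted (in restated form) from \cite{ba}, so there is no in-paper argument to compare your proposal against.

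Regarding your outline: the gap is in how the hypothesis $c_{\a}>0$ on $D$ enters. Your displayed lower bound replaces every non-vertex term $c_{\a}\x^{\a}$ by $-|c_{\a}|\sum_{\b\in T_{\a}}\lambda_{\a,\b}\x^{\b}$, so the sign of $c_{\a}$ is already discarded at that point; the subsequent claim that positivity on $D$ ``furnishes exactly the slack'' for a rescaled Young inequality is not connected to anything in the estimate you wrote. Your rescaling trick does work for the interior points $\a\notin D$ (because $\mathbf{0}\in T_{\a}$ lets you dump the cost into the constant term), but for $\a\in D$ you have $\mathbf{0}\notin T_{\a}$ and there is no constant to absorb into. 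Indeed, read literally, the restated theorem is false: take $f=1+x^{4}+y^{4}+100\,xy^{3}$, so that $V(\A)=\{\mathbf{0},(4,0),(0,4)\}\subset(2\N)^2$, all three necessary conditions of Theorem~\ref{thm6} hold, $D=\{(1,3)\}$ with $c_{(1,3)}=100>0$, yet $f(-t,t)=1-98t^{4}\to-\infty$. In every place the paper actually invokes Theorem~\ref{thm8} (the proofs of Lemmas~\ref{lm3}, \ref{lm5}, \ref{lm4}) the transformation $T$ from Lemma~\ref{lm1} has arranged that every $D$-point lies in $(2\N)^n$; under that extra condition $c_{\a}\x^{\a}\ge0$ on all of $\R^n$ for $\a\in D$, so those terms require no absorption whatsoever, and your scheme (rescaled AM--GM only for the interior non-vertex points, then coercivity from the axis vertices $2k_i\be_i$) goes through cleanly. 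The missing idea is thus not a sharper rescaling for $D$-terms but the observation that, once $D\subset(2\N)^n$, the positivity hypothesis lets you drop them outright.
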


For more details about coercive polynomials, the reader is referred to \cite{ba}.

\bigskip
Before proving the main results of this paper, we outline the key ideas as follows. For a polynomial satisfying certain conditions, we first apply an invertible linear transformation to its exponent vectors to obtain a coercive polynomial (Lemma \ref{lm1}). Then we prove that there is a one-to-one correspondence between the minimizers of the original polynomial over the first quadrant and those of the transformed polynomial (Lemma \ref{lm2}). Hence the coercivity of the transformed polynomial and the existence of global minimizers for coercive polynomials imply the existence of minimizers for the original polynomial (Lemma \ref{lm3} and Theorem \ref{thm4}). Finally, we prove three theorems on systems of polynomials with at least one positive real zero by virtue of Lemma \ref{lm1}, Lemma \ref{lm2} and Lemma \ref{lm3} (Theorem \ref{thm5}, Theorem \ref{thm2} and Theorem \ref{thm7}).

\section{Polynomials attaining their global minimums}
In this section, we prove some lemmas and a theorem on the existence of global minimizers in $\R_{+}^n$ for a class of polynomials.

Let $\R[\x^{\pm}]$ denote the Laurent polynomial ring and $g(\x)=\sum_{\a}c_{\a}\x^{\a}\in\R[\x^{\pm}]$. For an invertible matrix $T\in \GL_n(\Q)$, the polynomial obtained by applying $T$ to the exponent vectors of $g$ is denoted by $g^{T}=\sum_{\a}c_{\a}\x^{T\a}$.

Let $\Delta$ be a polytope of dimension $d$. For a vertex $\a$ of $\Delta$, if $\a$ is the intersection of precisely $d$ edges, then we say $\Delta$ is {\em simple} at $\a$. Obviously, a polygon is simple at any vertex.
\begin{lemma}\label{lm1}
Suppose $f=c_{\mathbf{0}}+\sum_{\a\in\A}c_{\a}\x^{\a}-\sum_{\b\in\B}d_{\b}\x^{\b}\in\R[\x^{\pm}]$ such that $\dim(\New(f))=n$, $\B\subseteq\New(f)^{\circ}\cap\Z^n$, $\mathbf{0}\in V(\New(f))$ and $\New(f)$ is simple at $\mathbf{0}$. Then there exists $\A_0=\{\a_1,\ldots,\a_n\}\subseteq V(\New(f))$ and $T\in \GL_n(\Q)$ such that
$$f^{T}=c_{\mathbf{0}}+\sum_{i=1}^nc_{\a_i}x_i^{2k_i}+\sum_{\a\in\A\backslash\A_0}c_{\a}\x^{T\a}-\sum_{\b\in\B}d_{\b}\x^{T\b},$$
where $k_i\in\N^*$, $T\a\in(2\N)^n$ for each $\a\in\A\backslash\A_0$ and $T\b\in\New(f^T)^{\circ}\cap\N^n$ for each $\b\in\B$.
\end{lemma}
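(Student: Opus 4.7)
My plan is to use the simplicity of $\New(f)$ at $\mathbf{0}$ to extract $n$ linearly independent edge vectors $\a_1,\ldots,\a_n$ of $\New(f)$, and then to construct $T$ so that each $\a_i$ is sent to a positive scalar multiple of the standard basis vector $\be_i$.

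First I would argue that since $\New(f)$ is simple at $\mathbf{0}$ and $\dim(\New(f))=n$, exactly $n$ edges of $\New(f)$ meet at $\mathbf{0}$; let $\A_0=\{\a_1,\ldots,\a_n\}$ be the other endpoints of these edges. Each $\a_i$ is a vertex of $\New(f)$, and because $\B\subseteq\New(f)^{\circ}$ every vertex of $\New(f)$ lies in $\A$, so $\A_0\subseteq V(\New(f))\cap\A$. By simplicity the $\a_i$'s are linearly independent in $\Z^n$, and the tangent cone of $\New(f)$ at $\mathbf{0}$ equals $\Cone(\a_1,\ldots,\a_n)$. Since a polytope is always contained in its tangent cone at a vertex, we get $\New(f)\subseteq\Cone(\a_1,\ldots,\a_n)$, and hence every $\bv\in\A\cup\B$ admits a unique representation $\bv=\sum_{i=1}^n\mu_i(\bv)\a_i$ with $\mu_i(\bv)\in\Q_{\ge 0}$.

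Next, I would let $M\in\Z^{n\times n}$ be the matrix whose columns are $\a_1,\ldots,\a_n$, so that $M\in\GL_n(\Q)$ and $M^{-1}\bv$ is the coordinate vector $(\mu_1(\bv),\ldots,\mu_n(\bv))$. Because $\A\cup\B$ is finite, there exists $N\in\N^*$ that is a common multiple of all denominators appearing in the rational entries of $M^{-1}\bv$ for $\bv\in\A\cup\B$. Setting $T:=2NM^{-1}\in\GL_n(\Q)$ and $k_i:=N$ for every $i$, one checks that $T\a_i=2N\be_i=2k_i\be_i$; for $\a\in\A\backslash\A_0$ each coordinate of $T\a$ is $2N\mu_i(\a)\in 2\N$, so $T\a\in(2\N)^n$; the same computation shows $T\b\in(2\N)^n\subseteq\N^n$ for $\b\in\B$. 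Finally, since $T$ is a linear bijection of $\R^n$ and $\b\in\New(f)^{\circ}$, we obtain $T\b\in T(\New(f))^{\circ}=\Conv(T(\supp(f)))^{\circ}=\New(f^T)^{\circ}$, giving the last required property.

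The only nontrivial ingredient I expect to need is the observation that simplicity at $\mathbf{0}$ forces $\New(f)$ to sit inside the cone generated by the $n$ edges emanating from $\mathbf{0}$; this is exactly what makes all the coefficients $\mu_i(\bv)$ nonnegative, and in turn guarantees that $T$ sends the entire support of $f$ into the nonnegative orthant. Once that inclusion is in hand, the remainder reduces to a bookkeeping exercise of choosing $N$ large enough to clear both denominators and parity conditions simultaneously.
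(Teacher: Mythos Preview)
Your proof is correct and follows essentially the same approach as the paper: extract the $n$ edge-endpoints $\a_1,\ldots,\a_n$ at the simple vertex $\mathbf{0}$, invert the resulting matrix, and rescale by an even positive integer large enough to clear all denominators. Your tangent-cone argument for why the images land in the nonnegative orthant is more explicit than the paper's one-line appeal to ``affine transformations keep convexity,'' but the underlying idea is identical.
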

\begin{proof}
Since $\New(f)$ is simple at $\mathbf{0}$, $\mathbf{0}$ is the intersection of precisely $n$ edges. Let $\A_0=\{\a_1,\ldots,\a_n\}\subseteq V(\New(f))$ be the other extreme points of these $n$ edges. Let $T'\in \GL_n(\Q)$ such that $T'(\a_1,\ldots,\a_n)=\diag(k_1',\ldots,k_n')$, where $k_i'\in\N^*$. Suppose $\mu\in\N^*$ is the least common multiple of the denominators appearing in the coordinates of $T'\a$ and $T'\b$ for $\a\in\A\backslash\A_0$ and $\b\in\B$. Let $T=2\mu T'$. Then $T\a\in(2\Z)^n$ for each $\a\in\A\backslash\A_0$ and $T\b\in\Z^n$ for each $\b\in\B$. Moreover, since affine transformations keep convexity, we have $T\a\in(2\N)^n$ and $T\b\in\New(f^T)^{\circ}\cap\N^n$. Thus $T$ meets the requirement with $k_i=\mu k_i',i=1,\ldots,n$.
\end{proof}

Consider the bijective componentwise exponential map
\begin{equation}
\exp:\R^n\rightarrow\R^n_+,\quad\x=(x_1,\ldots,x_n)\mapsto e^{\x}=(e^{\x_1},\ldots,e^{\x_n}).
\end{equation}
The image of a polynomial $g(\x)=\sum_{\a}c_{\a}\x^{\a}$ under the map exp is $g(e^{\x})=\sum_{\a}c_{\a}e^{\langle\a,\x\rangle}$, where $\langle\a,\x\rangle=\a^{\mathsf{T}}\x$ is the inner product of $\a$ and $\x$. Clearly, the range of $g(\x)$ over $\R_{+}^n$ is same as the range of $g(e^{\x})$ over $\R^n$.
\begin{lemma}\label{lm2}
Let $g(\x)=\sum_{\a}c_{\a}\x^{\a}\in\R[\x^{\pm}]$ and $T\in \GL_n(\Q)$ such that $g^T(\x)\in\R[\x^{\pm}]$. Then the infimums of $g(\x)$ and $g^{T}(\x)$ over $\R_{+}^n$ are the same. Furthermore, the minimizers (and the zeros) of $g(\x)$ and $g^{T}(\x)$ over $\R_{+}^n$ are in a one-to-one correspondence.
\end{lemma}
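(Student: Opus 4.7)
The plan is to exploit the bijection $\exp : \R^n \to \R_+^n$ already introduced in the excerpt, which converts monomial evaluation into exponentials of linear functionals and thereby turns the action of $T \in \GL_n(\Q)$ on exponent vectors into a precomposition with the linear map $T^{\mathsf{T}}$ on $\R^n$.

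First I would write, for any $\x \in \R^n$,
\begin{equation*}
g^{T}(e^{\x}) \;=\; \sum_{\a} c_{\a} e^{\langle T\a,\,\x\rangle} \;=\; \sum_{\a} c_{\a} e^{\langle \a,\,T^{\mathsf{T}}\x\rangle} \;=\; g(e^{T^{\mathsf{T}}\x}),
\end{equation*}
where the middle equality uses $\langle T\a,\x\rangle = \langle \a, T^{\mathsf{T}}\x\rangle$. Since $T \in \GL_n(\Q)$, the linear map $T^{\mathsf{T}} : \R^n \to \R^n$ is a bijection, so the range of $\x \mapsto g(e^{T^{\mathsf{T}}\x})$ on $\R^n$ coincides with the range of $\x \mapsto g(e^{\x})$ on $\R^n$. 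Combined with the remark immediately preceding the lemma (the range of a Laurent polynomial on $\R_+^n$ equals the range of its exponential pullback on $\R^n$), both infimums over $\R_+^n$ are equal.

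Next, for the one-to-one correspondence, I would define the map
\begin{equation*}
\varphi : \R_+^n \longrightarrow \R_+^n, \qquad \varphi(\y) := \exp\!\big(T^{\mathsf{T}} \log \y\big),
\end{equation*}
where $\log$ is applied componentwise. Because $\exp$ and $\log$ are mutually inverse bijections between $\R^n$ and $\R_+^n$ and $T^{\mathsf{T}}$ is a bijection on $\R^n$, $\varphi$ is a bijection of $\R_+^n$ onto itself. Setting $\y = e^{\x}$ in the identity above rewrites as $g^{T}(\y) = g(\varphi(\y))$ for every $\y \in \R_+^n$. Consequently $\y$ is a minimizer (respectively, a zero) of $g^{T}$ on $\R_+^n$ if and only if $\varphi(\y)$ is a minimizer (respectively, a zero) of $g$ on $\R_+^n$, giving the required bijection.

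There is essentially no obstacle here beyond bookkeeping: the only things to verify are that the rational-exponent expressions $\y^{\a}$ and $\y^{T\a}$ are well-defined on $\R_+^n$ (which is immediate since $\R_+$ is closed under arbitrary real powers) and that the substitution identity $g^{T}(e^{\x}) = g(e^{T^{\mathsf{T}}\x})$ holds term-by-term, which is a one-line computation. The potentially subtle point, which I would state explicitly, is that one uses $T^{\mathsf{T}}$ (not $T$) in the substitution, so that the transpose identity $\langle T\a, \x\rangle = \langle \a, T^{\mathsf{T}}\x\rangle$ yields the correct change of variables.
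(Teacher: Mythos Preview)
Your proof is correct and follows essentially the same approach as the paper: both reduce to the identity $g^{T}(e^{\x}) = g(e^{T^{\mathsf{T}}\x})$ via $\langle T\a,\x\rangle = \langle \a, T^{\mathsf{T}}\x\rangle$, and then use invertibility of $T^{\mathsf{T}}$ to conclude. The paper additionally records the inverse identity $g(e^{\x}) = g^{T}(e^{T^{*}\x})$ with $T^{*}=(T^{\mathsf{T}})^{-1}$, while you instead make the bijection $\varphi(\y)=\exp(T^{\mathsf{T}}\log \y)$ explicit; these are just two ways of packaging the same substitution.
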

\begin{proof}
We only need to show that the same conclusions hold for $g(e^{\x)}$ and $g^{T}(e^{\x})$ over $\R^n$, which easily follow from the equalities $$g(e^{\x})=\sum_{\a}c_{\a}e^{\langle\a,\x\rangle}=\sum_{\a}c_{\a}e^{\langle T\a,T^*\x\rangle}=g^{T}(e^{T^*\x})$$
and
$$g^T(e^{\x})=\sum_{\a}c_{\a}e^{\langle T\a,\x\rangle}=\sum_{\a}c_{\a}e^{\langle \a,T^{\mathsf{T}}\x\rangle}=g(e^{T^{\mathsf{T}}\x}),$$
where $T^*=(T^{-1})^{\mathsf{T}}=(T^{\mathsf{T}})^{-1}$ and $\mathsf{T}$ represents the transpose.
\end{proof}

\begin{lemma}\label{lm3}
Suppose $f=c_{\mathbf{0}}+\sum_{\a\in\A}c_{\a}\x^{\a}-\sum_{\b\in\B}d_{\b}\x^{\b}\in\R[\x^{\pm}]$, $c_{\mathbf{0}},c_{\a},d_{\b}>0$ such that $\A\subseteq\Z^n$, $\B\subseteq\New(f)^{\circ}\cap\Z^n$, $\dim(\New(f))=n$, $\mathbf{0}\in V(\New(f))$ and $\New(f)$ is simple at $\mathbf{0}$. Assume that $\sum_{\a\in\A}c_{\a}\x^{\a}-\sum_{\b\in\B}d_{\b}\x^{\b}$ is not nonnegative over $\R_{+}^n$. Then $f$ has a minimizer over $\R_{+}^n$.
\end{lemma}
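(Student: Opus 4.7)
The plan is to reduce $f$ to a non-Laurent polynomial $f^T$ via Lemma \ref{lm1}, establish coercivity of $f^T$ via Theorem \ref{thm8}, and then exploit the hypothesis together with the geometry of $\New(f^T)$ at the origin to force the minimizer strictly inside $\R_+^n$.

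First, I would apply Lemma \ref{lm1} to obtain $T\in\GL_n(\Q)$ and
\[
f^T = c_{\mathbf{0}} + \sum_{i=1}^n c_{\a_i} x_i^{2k_i} + \sum_{\a\in\A\backslash\A_0} c_\a \x^{T\a} - \sum_{\b\in\B} d_\b \x^{T\b} \in \R[\x],
\]
with $T\a\in(2\N)^n$ for $\a\in\A\backslash\A_0$ and $T\b\in\New(f^T)^\circ\cap\N^n$ for $\b\in\B$. Lemma \ref{lm2} then reduces the problem to producing a minimizer of $f^T$ over $\R_+^n$.

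Next I would verify the hypotheses of Theorem \ref{thm8} for $f^T$. Since $T$ maps vertices of $\New(f)$ bijectively to vertices of $\New(f^T)$, and $\B$ is interior, the vertices of $\supp(f^T)$ are $\mathbf{0}$, the $2k_i\be_i$, and possibly some $T\a$ with $\a\in\A$; all lie in $(2\N)^n$ with positive coefficients, and $2k_i\be_i$ is present for every $i$, so the necessary conditions in Theorem \ref{thm6} hold. The points $T\b$ lie in $\New(f^T)^\circ$ and thus on no proper face, so the set $D$ from (\ref{eq9}) consists solely of non-vertex images $T\a$ of elements of $\A$, each carrying a positive coefficient $c_\a$. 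Theorem \ref{thm8} therefore yields that $f^T$ is coercive on $\R^n$.

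By coercivity, $\{f^T\le c_{\mathbf{0}}\}\cap\overline{\R_+^n}$ is compact and nonempty (it contains $\mathbf{0}$), so $f^T$ attains its infimum on $\overline{\R_+^n}$ at some $\x^*$; Lemma \ref{lm2} combined with the hypothesis gives $\inf_{\R_+^n} f^T = \inf_{\R_+^n} f < c_{\mathbf{0}}$, whence $f^T(\x^*) < c_{\mathbf{0}}$. The main obstacle, and the final step, is to rule out $\x^*\in\partial\R_+^n$. If $x_j^*=0$ for some $j$, then because $\New(f^T)\subseteq\R_{\ge 0}^n$ and $\New(f^T)\cap\{x_j=0\}$ is a proper face of $\New(f^T)$ (the vertex $2k_j\be_j$ lies off this hyperplane), the interior point $T\b$ must have $(T\b)_j\ge 1$, and hence $(\x^*)^{T\b}=0$ for every $\b\in\B$. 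All the remaining terms of $f^T$ are nonnegative, so $f^T(\x^*)\ge c_{\mathbf{0}}$, contradicting $f^T(\x^*)<c_{\mathbf{0}}$. Therefore $\x^*\in\R_+^n$, and Lemma \ref{lm2} converts it to a minimizer of $f$ on $\R_+^n$.
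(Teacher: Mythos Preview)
Your proof is correct and follows essentially the same route as the paper: transform via Lemma~\ref{lm1}, invoke Theorem~\ref{thm8} for coercivity of $f^T$, obtain a minimizer on $\overline{\R_+^n}$, use the hypothesis with Lemma~\ref{lm2} to force the minimum value below $c_{\mathbf{0}}$, and then rule out the boundary by observing that the negative terms $d_\b\x^{T\b}$ vanish there. The only cosmetic difference is that the paper produces a minimizer in $\R_{\ge 0}^n$ by taking a global minimizer on $\R^n$ and replacing it with $|\x|$ (using $f^T(|\x|)\le f^T(\x)$), whereas you go directly via compactness of the sublevel set in $\overline{\R_+^n}$.
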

\begin{proof}
By Lemma \ref{lm1}, there exists $\A_0=\{\a_1,\ldots,\a_n\}\subseteq V(\New(f))$ and $T\in \GL_n(\Q)$ such that $f^{T}=c_{\mathbf{0}}+\sum_{i=1}^nc_{\a_i}x_i^{2k_i}+\sum_{\a\in\A\backslash\A_0}c_{\a}\x^{T\a}-\sum_{\b\in\B}d_{\b}\x^{T\b}\in\R[\x]$, where $T\a\in(2\N)^n$ for each $\a\in\A\backslash\A_0$ and $T\b\in\New(f^T)^{\circ}\cap\N^n$ for each $\b\in\B$. By Theorem \ref{thm8}, $f^{T}$ is a coercive polynomial, and hence has a global minimizer over $\R^n$. Note that $f^T(|\x|)=c_{\mathbf{0}}+\sum_{i=1}^nc_{\a_i}|x_i|^{2k_i}+\sum_{\a\in\A\backslash\A_0}c_{\a}|\x|^{T\a}-\sum_{\b\in\B}d_{\b}|\x|^{T\b}\le f^T(\x)$, where $|\x|=(|x_1|,\ldots,|x_n|)$. So $f^T$ has a global minimizer $\x^*$ in $\R_{\ge0}^n$, where $\R_{\ge0}$ is the set of nonnegative real numbers. Since $f-c_{\mathbf{0}}$ is not nonnegative over $\R_{+}^n$, by Lemma \ref{lm2}, $f^T-c_{\mathbf{0}}$ is not nonnegative over $\R_{+}^n$. It follows that the global minimum of $f^T$ is lower than $c_{\mathbf{0}}$, and since for $\x\in\R_{\ge0}^n\backslash\R_{+}^n$, $f^T(\x)\ge c_{\mathbf{0}}$, we have $\x^*\in\R_{+}^n$. Thus $f^T$ has a minimizer over $\R_{+}^n$ and so does $f$ by Lemma \ref{lm2}.
\end{proof}

\begin{theorem}\label{thm4}
Suppose $f=\sum_{\a\in\A}c_{\a}\x^{\a}-\sum_{\b\in\B}d_{\b}\x^{\b}\in\R[\x]$, $c_{\a},d_{\b}>0$ such that $\A\subseteq(2\N)^n$, $\B\subseteq\New(f)^{\circ}\cap\N^n$, $\dim(\New(f))=n$. Assume that $\Conv(\A\cup\{\mathbf{0}\})$ is simple at $\mathbf{0}$. If $\mathbf{0}$ is not a global minimizer of $f$, then $f$ has a global minimizer in $\R_{+}^n$.
\end{theorem}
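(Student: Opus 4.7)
The strategy is to reduce Theorem \ref{thm4} to Lemma \ref{lm3}. The hypotheses of Lemma \ref{lm3} differ from ours only in that they require an explicit positive constant term and require the non-constant part to fail nonnegativity on $\R_{+}^n$. My plan is therefore to normalize $f$ to have the required shape, and then translate the assumption ``$\mathbf{0}$ is not a global minimizer'' into the failure-of-nonnegativity condition.

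If $\mathbf{0}\in\A$, then $f=c_{\mathbf{0}}+\sum_{\a\in\A\setminus\{\mathbf{0}\}}c_{\a}\x^{\a}-\sum_{\b\in\B}d_{\b}\x^{\b}$ already has the form required by Lemma \ref{lm3}. If $\mathbf{0}\notin\A$, I would replace $f$ by $f'=f+c$ for any fixed $c>0$; this shift preserves minimizers and introduces a positive constant term. In both cases denote the result by $f'$, with constant term $c_0'=f'(\mathbf{0})>0$. Because $\B\subseteq\Conv(\A)^{\circ}$ and $\dim\Conv(\A)=n$, the inclusion $\Conv(\A)^{\circ}\subseteq\Conv(\A\cup\{\mathbf{0}\})^{\circ}$ gives $\B\subseteq\New(f')^{\circ}$. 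Combined with the hypothesis that $\Conv(\A\cup\{\mathbf{0}\})$ is simple at $\mathbf{0}$ (which tacitly forces $\mathbf{0}$ to be a vertex), this supplies all the geometric hypotheses of Lemma \ref{lm3}.

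The critical step is to verify that $f'-c_0'=f-f(\mathbf{0})$ is not nonnegative on $\R_{+}^n$. By hypothesis there exists $\x^*\in\R^n$ with $f(\x^*)<f(\mathbf{0})$. Since every $\a\in\A$ lies in $(2\N)^n$ we have $|\x|^{\a}=\x^{\a}$, while for $\b\in\N^n$ we have $|\x|^{\b}\ge\x^{\b}$; together with $c_{\a},d_{\b}>0$ this forces $f(|\x^*|)\le f(\x^*)<f(\mathbf{0})$, so we may assume $\x^*\in\R_{\ge 0}^n$. If some coordinate of $\x^*$ vanishes, continuity of $f$ applied to $\x^*+\epsilon\mathbf{1}$ for small $\epsilon>0$ supplies a strict witness inside $\R_{+}^n$.

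All hypotheses of Lemma \ref{lm3} now hold for $f'$, so $f'$ admits a minimizer in $\R_{+}^n$; since $f$ and $f'$ differ by a constant, so does $f$. I expect the main subtle point to be confirming that adjoining $\mathbf{0}$ to the support does not eject the exponents in $\B$ from the interior of the enlarged Newton polytope—this is precisely where the $n$-dimensionality of $\New(f)$ is used—while the descent from $\R^n$ to $\R_{+}^n$ via the $(2\N)^n$-symmetry and a small perturbation is entirely routine.
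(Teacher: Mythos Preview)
Your proposal is correct and follows essentially the same route as the paper: reduce to Lemma~\ref{lm3} by adding a positive constant when $\mathbf{0}\notin\A$, and use the symmetry $f(|\x|)\le f(\x)$ together with continuity to convert ``$\mathbf{0}$ is not a global minimizer'' into the required failure of nonnegativity on $\R_+^n$. You are a bit more careful than the paper about why $\B$ stays in the interior after adjoining $\mathbf{0}$; on the other hand, you should make explicit (as the paper does) that the $\R_+^n$-minimizer produced by Lemma~\ref{lm3} is in fact a global minimizer over all of $\R^n$, which follows immediately from the same symmetry-and-density argument.
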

\begin{proof}
Since $f(|\x|)=\sum_{\a\in\A}c_{\a}|\x|^{\a}-\sum_{\b\in\B}d_{\b}|\x|^{\b}\le f(\x)$, we only need to search the global minimizers of $f$ in $\R_{\ge0}^n$, or equivalently in $\{\mathbf{0}\}\cup\R_{+}^n$ (since for $\x\in\R_{\ge0}^n\backslash\R_{+}^n$, $f(\x)\ge f(\mathbf{0})$). If $\mathbf{0}\in\A$ and $f-c_{\mathbf{0}}$ is nonnegative over $\R_{+}^n$, then $\mathbf{0}$ is a global minimizer of $f$. If $\mathbf{0}\in\A$ and $f-c_{\mathbf{0}}$ is not nonnegative over $\R_{+}^n$, then by Lemma \ref{lm3}, $f$ has a minimizer over $\R_{+}^n$, which is also a global minimizer. If $\mathbf{0}\notin\A$ and $f$ is nonnegative over $\R_{+}^n$, then $\mathbf{0}$ is a global minimizer of $f$. If $\mathbf{0}\notin\A$ and $f$ is not nonnegative over $\R_{+}^n$, consider the polynomial $f+c$, $c>0$. By Lemma \ref{lm3}, $f+c$ has a minimizer over $\R_{+}^n$. It follows that $f$ has a minimizer over $\R_{+}^n$, which is also a global minimizer.
\end{proof}

\section{Systems of polynomials with at least one positive real zero}
A {\em positive real zero} of a polynomial or a system of polynomials is a real zero with positive coordinates. Note that the positive real zeros of the polynomials $f(x_1,\ldots,x_n)$ and $f(x_1^2,\ldots,x_n^2)$ are in a one-to-one correspondence. Since we only consider positive real zeros in this paper, we can apply the map $x_i\mapsto x_i^2$ $(1\le i\le n)$ and assume that the supports of polynomials in the following are in $(2\N)^n$ if necessary.

\begin{proposition}\label{thm1}
Let $F$ be the following system of polynomial equations
\begin{equation}
\sum_{\a\in\A}c_{\a}(\a-\C)\x^{\a}-\sum_{\b\in\B}d_{\b}(\b-\C)\x^{\b}=\mathbf{0},
\end{equation}
where $\A\subseteq\N^n$, $c_{\a},d_{\b}>0$ and $\C\in V(\Delta)$, $\B\subseteq\Delta^{\circ}\cap\N^n$ with $\Delta=\Conv(\A\cup\{\C\})$. Assume that $\dim(\Delta)=n$, $\Delta$ is simple at $\C$ and $\sum_{\a\in\A}c_{\a}\x^{\a}-\sum_{\b\in\B}d_{\b}\x^{\b}$ is not nonnegative over $\R_{+}^n$. Then $F$ has at least one positive real zero.
\end{proposition}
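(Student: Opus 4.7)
The plan is to reformulate the system $F$ on $\R_+^n$ as the vanishing of the gradient of a single auxiliary Laurent polynomial, and then invoke Lemma \ref{lm3} to produce a critical point.

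Define
\begin{equation*}
\tilde{f}(\x):=\x^{-\C}\Bigl(\sum_{\a\in\A}c_{\a}\x^{\a}-\sum_{\b\in\B}d_{\b}\x^{\b}\Bigr)=\sum_{\a\in\A}c_{\a}\x^{\a-\C}-\sum_{\b\in\B}d_{\b}\x^{\b-\C}\in\R[\x^{\pm}].
\end{equation*}
A direct Leibniz-rule computation gives, for each $i=1,\ldots,n$,
\begin{equation*}
x_i\,\frac{\partial\tilde{f}}{\partial x_i}(\x)=\x^{-\C}\Bigl(\sum_{\a\in\A}c_{\a}(\alpha_i-\gamma_i)\x^{\a}-\sum_{\b\in\B}d_{\b}(\beta_i-\gamma_i)\x^{\b}\Bigr).
\end{equation*}
Since $x_i>0$ and $\x^{-\C}>0$ on $\R_+^n$, the $i$-th equation of $F$ at $\x\in\R_+^n$ is equivalent to $\partial_i\tilde{f}(\x)=0$. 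Hence the positive real zeros of $F$ are precisely the critical points of $\tilde{f}$ in $\R_+^n$, and it suffices to exhibit a global minimizer of $\tilde{f}$ there.

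For this, observe that the translation $\a\mapsto\a-\C$ sends $\Delta$ to $\Delta-\C$, which meets the geometric hypotheses of Lemma \ref{lm3} at $\mathbf{0}$: $\mathbf{0}$ is a vertex of $\Delta-\C$ (as $\C\in V(\Delta)$), $\Delta-\C$ is simple at $\mathbf{0}$ by translation invariance of simplicity, $\dim(\Delta-\C)=n$, and $\B-\C\subseteq(\Delta-\C)^{\circ}\cap\Z^n$. Moreover, since $g:=\sum_{\a}c_{\a}\x^{\a}-\sum_{\b}d_{\b}\x^{\b}$ is not nonnegative on $\R_+^n$ while $\x^{-\C}>0$ there, $\tilde{f}=\x^{-\C}g$ is not nonnegative on $\R_+^n$ either. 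If $\C\in\A$, then $\tilde{f}$ already carries a positive constant term $c_{\C}$, $\New(\tilde{f})=\Delta-\C$, and the hypothesis ``$\tilde{f}-c_{\C}$ not nonnegative'' follows from $\tilde{f}-c_{\C}\le\tilde{f}$ on $\R_+^n$; Lemma \ref{lm3} then supplies a minimizer $\x^{*}\in\R_+^n$. If $\C\notin\A$, apply Lemma \ref{lm3} instead to $\tilde{f}+c$ for an arbitrary $c>0$; adjoining $\mathbf{0}$ to the support gives $\New(\tilde{f}+c)=\Delta-\C$, the simplicity/interior/dimension conditions persist, critical points are unchanged, and the same conclusion follows. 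In either case $\nabla\tilde{f}(\x^{*})=\mathbf{0}$, so $\x^{*}$ is a positive real zero of $F$.

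The main conceptual step is the gradient reformulation, which converts the vector-valued system $F$ into the scalar problem of locating a critical point of a Laurent polynomial whose Newton polytope and coefficient positivity structure are tailor-made for the coercivity-based Lemma \ref{lm3}. Once this reformulation is recognised, what remains is routine: track the translation $\a\mapsto\a-\C$ so the simple vertex moves to the origin, check that the ``not nonnegative'' hypothesis transfers across multiplication by $\x^{-\C}$, and absorb the case $\C\notin\A$ by the harmless trick of adding a positive constant.
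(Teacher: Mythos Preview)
Your proof is correct and follows essentially the same route as the paper. Both arguments shift the exponents by $-\C$, apply Lemma~\ref{lm3} to the resulting Laurent polynomial (after ensuring a positive constant term), and read off $F$ from the first-order optimality condition at the minimizer. The only cosmetic difference is that the paper introduces an auxiliary $d\x^{\C}$, finds the minimum value $\xi$ of $f'=f/\x^{\C}$, and then subtracts the equations $f-\xi\x^{\C}=0$ and $x_i\partial_i(f-\xi\x^{\C})=0$ to eliminate the $\x^{\C}$ term, whereas you compute $x_i\partial_i\tilde f$ directly and observe that it is $\x^{-\C}$ times the $i$-th equation of $F$; your formulation is slightly more streamlined but the content is identical.
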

\begin{proof}
Consider the polynomial $f=d\x^{\C}+\sum_{\a\in\A}c_{\a}\x^{\a}-\sum_{\b\in\B}d_{\b}\x^{\b}$. Let $f'=f/\x^{\C}=d+\sum_{\a\in\A}c_{\a}\x^{\a-\C}-\sum_{\b\in\B}d_{\b}\x^{\b-\C}$. Then by Lemma \ref{lm3}, $f'$ has a minimizer over $\R_{+}^n$. Assume the minimum of $f'$ over $\R_{+}^n$ is $\xi$. Then $f'(\x)-\xi$ is nonnegative over $\R_{+}^n$ and has a positive real zero. It follows that $f-\xi\x^{\C}=(d-\xi)\x^{\C}+\sum_{\a\in\A}c_{\a}\x^{\a}-\sum_{\b\in\B}d_{\b}\x^{\b}$ is nonnegative over $\R_{+}^n$ and has a positive real zero, which implies that the system of $f-\xi\x^{\C}=0$ and $\nabla(f-\xi\x^{\C})=\mathbf{0}$ ($\nabla$ means the gradient) has a positive real zero. Multiplying $f-\xi\x^{\C}=0$ by $\C$ yields
\begin{equation}\label{eq2}
(d-\xi)\C\x^{\C}+\sum_{\a\in\A}c_{\a}\C\x^{\a}-\sum_{\b\in\B}d_{\b}\C\x^{\b}=\mathbf{0}.
\end{equation}
Multiplying the $i$-th equation of $\nabla(f-\xi\x^{\C})=\mathbf{0}$ by $x_i$ yields
\begin{equation}\label{eq1}
(d-\xi)\C\x^{\C}+\sum_{\a\in\A}c_{\a}\a\x^{\a}-\sum_{\b\in\B}d_{\b}\b\x^{\b}=\mathbf{0}.
\end{equation}
From (\ref{eq1})$-$(\ref{eq2}), we obtain
\begin{equation}
\sum_{\a\in\A}c_{\a}(\a-\C)\x^{\a}-\sum_{\b\in\B}d_{\b}(\b-\C)\x^{\b}=\mathbf{0},
\end{equation}
which is exactly $F$. Thus $F$ has a positive real zero.
\end{proof}

\begin{example}
Consider the following system of polynomial equations with $\A=\{\a_1,\a_2,\a_3,\a_4\}=\{(8,0),(0,8),(4,4),(0,0)\}$, $\B=\{\b_1,\b_2\}=\{(1,4),(3,2)\}$ and $\C=(8,8)$:
\begin{equation}\label{ex1}
\begin{cases}
-8y^8-4x^4y^4-8+21xy^4+5x^3y^2=0\\
-8x^8-4x^4y^4-8+12xy^4+6x^3y^2=0
\end{cases}.
\end{equation}
\begin{center}
\begin{tikzpicture}
\draw (0,0)--(0,2);
\draw (0,0)--(2,0);
\draw (2,0)--(2,2);
\draw (0,2)--(2,2);
\draw (0,0)--(2,2);
\draw (0,2)--(2,0);
\fill (0,0) circle (2pt);
\node[below left] (1) at (0,0) {$\a_4$};
\fill (2,0) circle (2pt);
\node[below right] (2) at (2,0) {$\a_1$};
\fill (0,2) circle (2pt);
\node[above left] (3) at (0,2) {$\a_2$};
\fill (1.94,1.94) rectangle (2.06,2.06);
\node[above right] (4) at (2,2) {$\gamma$};
\fill (1,1) circle (2pt);
\node[right] (7) at (1,1) {$\a_3$};
\draw (0.25,1) circle (2pt);
\node[right] (5) at (0.25,1) {$\b_1$};
\draw (0.75,0.5) circle (2pt);
\node[right] (6) at (0.75,0.5) {$\b_2$};
\end{tikzpicture}
\end{center}
The polynomial $x^8+y^8+x^4y^4+1-3xy^4-x^3y^2$ is not nonnegative over $\R_{+}^n$ and hence by Proposition \ref{thm1}, the system (\ref{ex1}) has at least one positive real zero. In other words, the lower bound for the number of positive real zero of (\ref{ex1}) is one. Actually, a computation by {\tt Mathematica} yields two positive real zeros of (\ref{ex1}):
$$(1.13128, 1.23327) \textrm{ and } (0.72571, 0.961524).$$
\end{example}

\begin{lemma}\label{lm5}
Suppose $f_{d}=\sum_{\a\in\A}c_{\a}\x^{\a}+d\x^{\C}-\sum_{\b\in\B}d_{\b}\x^{\b}\in\R[\x]$, $\A\cup\{\C\}\subseteq\N^n$, $c_{\a},d_{\b}>0$ such that $\B\subseteq\Delta^{\circ}\cap\N^n$ with $\Delta=\Conv(\A\cup\{\C\})$. Assume $\dim(\Delta)=n$, $\Delta$ is simple at some vertex $\a_0\in\A$ ($\a_0\ne\C$) and $\sum_{\a\in\A}c_{\a}\x^{\a}-\sum_{\b\in\B}d_{\b}\x^{\b}$ is not nonnegative over $\R_{+}^n$. Let $d^*:=\inf\{d\mid f_d\textrm{ is nonnegative over }\R_{+}^n\}$. Then $f_{d^*}$ has a positive real zero.
\end{lemma}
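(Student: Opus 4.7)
The plan is to adapt the argument of Proposition \ref{thm1}, but translating by the simplicity vertex $\a_0$ rather than by $\C$, and then pin down $d^*$ via a concavity/continuity argument for the infimum.

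First, I would pass to the Laurent polynomial $g_d(\x) := f_d(\x)/\x^{\a_0}$. Its Newton polytope equals $\Delta - \a_0$, which has $\mathbf{0}$ as a simple vertex; the negative-term exponents $\b - \a_0$ still lie in $\New(g_d)^{\circ}$; and the constant term is $c_{\a_0} > 0$. Thus $g_d$ is exactly of the shape to which Lemma \ref{lm3} applies. Moreover, since $\x^{\a_0} > 0$ on $\R_+^n$, $f_d$ is nonnegative (resp.\ has a zero) on $\R_+^n$ if and only if $g_d$ is.

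Second, I would record two easy facts: $d^* > 0$, because the hypothesis forces $f_0 = \sum c_\a\x^\a - \sum d_\b\x^\b$ to be not nonnegative on $\R_+^n$; and $f_{d^*} \geq 0$ on $\R_+^n$ as a pointwise limit of $f_d \geq 0$ for $d \downarrow d^*$. Next, applying Lemma \ref{lm1} to $g_{d^*}$, I would obtain a polynomial $g_{d^*}^T$ whose positive-term exponents lie in $(2\N)^n$ (with axis vertices $2k_i\be_i$) and whose negative-term exponents $T(\b-\a_0)$ lie in $\New(g_{d^*}^T)^{\circ}\cap\N^n$, hence have all coordinates strictly positive. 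Since $d^* > 0$, every coefficient that Theorem \ref{thm8} requires to be positive is indeed positive, so $g_{d^*}^T$ is coercive and attains a minimizer $\x^*$; via $g_{d^*}^T(|\x|) \leq g_{d^*}^T(\x)$ we may take $\x^* \in \R_{\geq 0}^n$. On the boundary $\R_{\geq 0}^n \setminus \R_+^n$, the strict positivity of the coordinates of each $T(\b-\a_0)$ makes every negative term vanish, so $g_{d^*}^T \geq c_{\a_0} > 0$ there.

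Third, I would prove $\min g_{d^*}^T = 0$. Define $\eta(d) := \inf_{\R_+^n} g_d$; since $g_d(\x)$ is affine in $d$ for each fixed $\x$, $\eta$ is concave in $d$. For $d > d^*$, $g_d \geq 0$ gives $\eta(d) \geq 0$; for $d < d^*$ there is $\x_0 \in \R_+^n$ with $f_d(\x_0) < 0$, hence $g_d(\x_0) < 0$, so $\eta(d) < 0$. Because $d^* > 0$, Theorem \ref{thm8}'s positivity hypotheses survive small perturbations of $d$, so $\eta$ is finite and concave (hence continuous) in a two-sided neighborhood of $d^*$, forcing $\eta(d^*) = 0$. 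By Lemma \ref{lm2} this equals $\min g_{d^*}^T$, which is therefore $0 < c_{\a_0}$; the minimizer $\x^*$ cannot lie on the boundary of $\R_{\geq 0}^n$, so $\x^* \in \R_+^n$ and $g_{d^*}^T(\x^*) = 0$. Pulling $\x^*$ back via Lemma \ref{lm2} and multiplying by $\x^{\a_0}$ yields a positive real zero of $f_{d^*}$.

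The main obstacle I anticipate is the concavity/continuity step giving $\eta(d^*) = 0$: one must verify that $\eta$ is finite on both sides of $d^*$, which is secured by the fact that $d^* > 0$ keeps every coefficient positivity demanded by Theorem \ref{thm8} intact under small perturbations of $d$. The remaining work is routine bookkeeping about the boundary behavior of $g_{d^*}^T$ and the transformation $T$.
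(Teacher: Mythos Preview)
Your approach is essentially the paper's: translate by the simple vertex $\a_0$, apply the map $T$ from Lemma~\ref{lm1}, invoke coercivity via Theorem~\ref{thm8}, and argue that the infimum at $d=d^*$ is exactly zero so that the minimizer must lie in $\R_+^n$. Your concavity argument for $\eta(d^*)=0$ is a clean variant of the paper's direct compactness argument (which fixes $d_0<d^*$, uses coercivity of $f_{d_0}'^{T}$ to confine all negative-value points $\x_d$ for $d\in(d_0,d^*)$ to a common bounded region, and then passes to the limit $d\to d^*$); at the end the paper quotes Lemma~\ref{lm3}, which amounts to the same boundary analysis you spell out by hand.

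There is, however, one genuine omission: you never show that $d^*<\infty$, i.e., that $\{d : f_d \ge 0 \text{ on } \R_+^n\}$ is nonempty. You rely on this implicitly when you speak of a ``two-sided neighborhood of $d^*$'' and of ``$f_d \ge 0$ for $d\downarrow d^*$.'' The paper treats this as a real first step and handles it by writing $f_d$ as a sum of circuit polynomials (equation~(\ref{eq8})) and invoking Theorem~\ref{nc-thm1} for $d$ large. Your own ingredients can also supply a proof: for $d\ge 1$ the points where $g_d^T<0$ lie in $\{\|\x\|_\infty\le N\}\cap\R_+^n$ by coercivity of $g_1^T$ together with $g_d^T\ge g_1^T$ on $\R_+^n$; if $\eta(d)<0$ persisted for all $d$, the minimizers $\x_d^*$ would satisfy $(\x_d^*)^{T(\C-\a_0)}\to 0$ and hence accumulate on $\partial\R_{\ge0}^n$, contradicting $g_1^T(\x_d^*)<0$ against $g_1^T\ge c_{\a_0}$ on that boundary. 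Either way the finiteness of $d^*$ must be argued, not assumed.
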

\begin{proof}
Let $|\B|=l$. For each $\b\in\B$, since $\b\in\Delta^{\circ}$, then there must exist a subset $A_{\b}$ of $\A$ such that $A_{\b}\cup\{\C\}$ comprises the vertices of a simplex $\Delta_{\b}$ containing $\b$ as an interior point. For each $\a\in\cup_{\b\in\B} A_{\b}$, count how many $A_{\b}$'s contain $\a$ and evenly distribute $c_{\a}$. Then we can write
\begin{equation}\label{eq8}
f_d=\sum_{\b\in\B}(\sum_{\a\in A_{\b}}c_{\a\b}\x^{\a}+\frac{d}{l}\x^{\C}-d_{\b}\x^{\b})+\sum_{\a\notin\cup_{\b\in\B} A_{\b}}c_{\a}\x^{\a}
\end{equation}
as a sum of circuit polynomials. Observe that if $d$ is sufficiently large, then every circuit polynomial appearing in (\ref{eq8}) is nonnegative by Theorem \ref{nc-thm1} and hence $f_d$ is nonnegative over $\R_{+}^n$. So the set in the definition of $d^*$ is nonempty and obviously has a lower bound $0$. It follows that $d^*$ exists.

Let $f_{d}'=f_{d}/\x^{\a_0}=c_{\a_0}+\sum_{\a\in\A\backslash\{\a_0\}}c_{\a}\x^{\a-\a_0}+d\x^{\C-\a_0}-\sum_{\b\in\B}d_{\b}\x^{\b-\a_0}$. By Lemma \ref{lm1}, there exists $\A_0=\{\a_1,\ldots,\a_n\}\subseteq V(\A)\backslash\{\a_0\}$ and $T\in \GL_n(\Q)$ such that $f_{d}'^{T}=c_{\a_0}+\sum_{i=1}^nc_{\a_i}x_i^{2k_i}+\sum_{\a\in\A\backslash(\A_0\cup\{\a_0\})}c_{\a}$
$\x^{T(\a-\a_0)}+d\x^{T(\C-\a_0)}-\sum_{\b\in\B}d_{\b}\x^{T(\b-\a_0)}\in\R[\x]$, where $T(\a-\a_0),T(\C-\a_0)\in(2\N)^n$, $T(\b-\a_0)\in\New(f_{d}'^{T})^{\circ}\cap\N^n$ (we assume $\C\notin\A_0$ without loss of generality). By Lemma \ref{lm2}, the nonnegativity of $f_d'^T$ over $\R_{+}^n$ is the same as the nonnegativity of $f_d'$ over $\R_{+}^n$, and hence is the same as the nonnegativity of $f_d$ over $\R_{+}^n$. Let $0<d_0<d^*$. By Theorem \ref{thm8}, $f_{d_0}'^{T}$ is a coercive polynomial. Hence there exists $N>0$ such that for $\left\|\x\right\|_{\infty}>N$, $f_{d_0}'^T(\x)>0$. For $d_0<d<d^*$, since $f_{d}'^{T}(\x)-f_{d_0}'^{T}(\x)=(d-d_0)\x^{T(\C-\a_0)}>0$ over $\R_{+}^n$, we have $f_{d}'^{T}(\x)>f_{d_0}'^{T}(\x)$ over $\R_{+}^n$. Thus for $\left\|\x\right\|_{\infty}>N$ and $\x\in\R_{+}^n$, $f_{d}'^T(\x)>0$. By the definition of $d^*$, $f_{d}$ is not nonnegative over $\R_{+}^n$ and so is $f_{d}'^T$. That is to say, there exists $\x_{d}\in\R_{+}^n$ such that $f_{d}'^T(\x_{d})<0$. It follows $\left\|\x_{d}\right\|_{\infty}\le N$. Let $d\rightarrow d^*$. Then we have $f_d'^T(\x_d)-f_{d^*}'^T(\x_d)=(d-d^*)\x_d^{T(\C-\a_0)}\rightarrow0$. Since $f_{d^*}'^T(\x_d)\ge0$ and $f_d'^T(\x_d)<0$, we must have $f_{d^*}'^T(\x_d)\rightarrow0$. Thus the infimum of $f_{d^*}'^T$ over $\R_{+}^n$ is $0$. It follows that $f_{d^*}'^T-c_{\a_0}$ is not nonnegative over $\R_{+}^n$. So by Lemma \ref{lm3}, $f_{d^*}'^T$ has a minimizer over $\R_{+}^n$, which is a positive real zero of $f_{d^*}'^T$. As a consequence, $f_{d^*}'$ also has a positive real zero by Lemma \ref{lm2} and so does $f_{d^*}$.
\end{proof}

\begin{proposition}\label{thm3}
Let $F$ be the following system of polynomial equations
\begin{equation}
\sum_{\a\in\A}c_{\a}(\a-\C)\x^{\a}-\sum_{\b\in\B}d_{\b}(\b-\C)\x^{\b}=\mathbf{0},
\end{equation}
where $\A\cup\{\C\}\subseteq\N^n$, $c_{\a},d_{\b}>0$ and $\B\subseteq\Delta^{\circ}\cap\N^n$ with $\Delta=\Conv(\A\cup\{\C\})$. Assume that $\dim(\Delta)=n$, $\Delta$ is simple at some vertex $\a_0\in\A$ ($\a_0\ne\C$) and $\sum_{\a\in\A}c_{\a}\x^{\a}-\sum_{\b\in\B}d_{\b}\x^{\b}$ is not nonnegative over $\R_{+}^n$. Then $F$ has at least one positive real zero.
\end{proposition}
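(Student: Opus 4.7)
The plan is to adapt the argument of Proposition \ref{thm1}, using Lemma \ref{lm5} in place of Lemma \ref{lm3}; this relaxation is precisely what allows the simpleness hypothesis to be imposed at a vertex $\a_0\in\A$ rather than at $\C$.

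First I would introduce the one-parameter family
\begin{equation*}
f_d(\x) \;=\; \sum_{\a\in\A}c_{\a}\x^{\a} \;+\; d\,\x^{\C} \;-\; \sum_{\b\in\B}d_{\b}\x^{\b}.
\end{equation*}
The hypotheses of Proposition \ref{thm3} are exactly the hypotheses of Lemma \ref{lm5} for this family: $\B\subseteq\Delta^{\circ}\cap\N^n$, $\dim(\Delta)=n$, $\Delta$ is simple at $\a_0\in\A\backslash\{\C\}$, and $f_0$ fails to be nonnegative on $\R_{+}^n$. Setting $d^{*}:=\inf\{d\mid f_d\textrm{ is nonnegative on }\R_{+}^n\}$, Lemma \ref{lm5} supplies a point $\x_0\in\R_{+}^n$ with $f_{d^*}(\x_0)=0$, and by continuity in $d$ the polynomial $f_{d^*}$ is itself nonnegative on $\R_{+}^n$.

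Since $\x_0$ is an interior point of $\R_{+}^n$ that attains the minimum of $f_{d^*}$, one has $\nabla f_{d^*}(\x_0)=\mathbf{0}$. Multiplying the $i$-th component of this equation by the $i$-th coordinate of $\x_0$ and assembling the $n$ resulting scalar identities into a single vector identity yields
\begin{equation*}
\sum_{\a\in\A}c_{\a}\,\a\,\x_0^{\a} \;+\; d^{*}\C\,\x_0^{\C} \;-\; \sum_{\b\in\B}d_{\b}\,\b\,\x_0^{\b} \;=\; \mathbf{0},
\end{equation*}
while multiplying the scalar identity $f_{d^*}(\x_0)=0$ by the vector $\C$ gives
\begin{equation*}
\sum_{\a\in\A}c_{\a}\,\C\,\x_0^{\a} \;+\; d^{*}\C\,\x_0^{\C} \;-\; \sum_{\b\in\B}d_{\b}\,\C\,\x_0^{\b} \;=\; \mathbf{0}.
\end{equation*}
Subtracting the second from the first cancels the $d^{*}\C\x_0^{\C}$ term and leaves precisely $F(\x_0)=\mathbf{0}$, as required.

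The core difficulty has already been isolated inside Lemma \ref{lm5}: a priori one only knows that there is a sequence of near-counterexamples $\x_d\in\R_{+}^n$ as $d\searrow d^*$, and nothing obvious prevents them from escaping to the boundary of $\R_{+}^n$ or to infinity. The coercivity trick employed in Lemma \ref{lm5}, via the monomial change of exponents of Lemma \ref{lm1} and the sufficient condition of Theorem \ref{thm8}, pins such a sequence inside a compact subset of $\R_{+}^n$ and produces the genuine zero $\x_0$. Once this compactness statement is granted, the rest is just the gradient-plus-Euler cancellation already used in Proposition \ref{thm1}.
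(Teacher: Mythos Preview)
Your proposal is correct and follows essentially the same route as the paper: introduce the family $f_d$, invoke Lemma~\ref{lm5} to get a positive real zero $\x_0$ of $f_{d^*}$ that is also a minimizer on $\R_+^n$, and then perform the same Euler-type subtraction of $\C\cdot f_{d^*}(\x_0)=0$ from $x_i\partial_i f_{d^*}(\x_0)=0$ to eliminate the $d^*\C\x_0^{\C}$ term. The only cosmetic difference is that you justify the minimizer property via continuity in $d$, whereas the paper extracts it directly from the proof of Lemma~\ref{lm5}.
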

\begin{proof}
Consider the polynomial $f_d=\sum_{\a\in\A}c_{\a}\x^{\a}+d\x^{\C}-\sum_{\b\in\B}d_{\b}\x^{\b}$. Define $d^*$ as in Lemma \ref{lm5}. Then by Lemma \ref{lm5}, $f_{d^*}$ has a positive real zero which is also a minimizer of $f_{d^*}$ over $\R_{+}^n$. It implies that the system of $f_{d^*}=0$ and $\nabla(f_{d^*})=\mathbf{0}$ has a positive real zero. Multiplying $f_{d^*}=0$ by $\C$ yields
\begin{equation}\label{eq6}
\sum_{\a\in\A}c_{\a}\C\x^{\a}+d^*\C\x^{\C}-\sum_{\b\in\B}d_{\b}\C\x^{\b}=\mathbf{0}.
\end{equation}
Multiplying the $i$-th equation of $\nabla(f_{d^*})=\mathbf{0}$ by $x_i$ yields
\begin{equation}\label{eq5}
\sum_{\a\in\A}c_{\a}\a\x^{\a}+d^*\C\x^{\C}-\sum_{\b\in\B}d_{\b}\b\x^{\b}=\mathbf{0}.
\end{equation}
From (\ref{eq5})$-$(\ref{eq6}), we obtain
\begin{equation}
\sum_{\a\in\A}c_{\a}(\a-\C)\x^{\a}-\sum_{\b\in\B}d_{\b}(\b-\C)\x^{\b}=\mathbf{0},
\end{equation}
which is exactly $F$. Thus $F$ has a positive real zero.
\end{proof}

\begin{example}
Consider the following system of polynomial equations with $\A=\{\a_1,\a_2,\a_3,\a_4\}=\{(8,8),(8,0),(0,8),(0,0)\}$, $\B=\{\b_1,\b_2\}=\{(1,4),(3,2)\}$ and $\C=(4,4)$:
\begin{equation}\label{ex2}
\begin{cases}
4x^8y^8+4x^8-4y^8-4+9xy^4+x^3y^2=0\\
4x^8y^8-4x^8+4y^8-4+2x^3y^2=0
\end{cases}.
\end{equation}
\begin{center}
\begin{tikzpicture}
\draw (0,0)--(0,2);
\draw (0,0)--(2,0);
\draw (2,0)--(2,2);
\draw (0,2)--(2,2);
\draw (0,0)--(2,2);
\draw (0,2)--(2,0);
\fill (0,0) circle (2pt);
\node[below left] (1) at (0,0) {$\a_4$};
\fill (2,0) circle (2pt);
\node[below right] (2) at (2,0) {$\a_2$};
\fill (0,2) circle (2pt);
\node[above left] (3) at (0,2) {$\a_3$};
\fill (2,2) circle (2pt);
\node[above right] (4) at (2,2) {$\a_1$};
\fill (0.94,0.94) rectangle (1.06,1.06);
\node[right] (7) at (1,1) {$\C$};
\draw (0.25,1) circle (2pt);
\node[right] (5) at (0.25,1) {$\b_1$};
\draw (0.75,0.5) circle (2pt);
\node[right] (6) at (0.75,0.5) {$\b_2$};
\end{tikzpicture}
\end{center}
The polynomial $x^8y^8+x^8+y^8+1-3xy^4-x^3y^2$ is not nonnegative over $\R_{+}^n$ and hence by Proposition \ref{thm3}, the system (\ref{ex2}) has at least one positive real zero.
In other words, the lower bound for the number of positive real zero of (\ref{ex2}) is one. Actually, a computation by {\tt Mathematica} yields exactly one positive real zero of (\ref{ex2}): $$(0.752174, 0.974982).$$
\end{example}

Combining Proposition \ref{thm1} with Proposition \ref{thm3}, we obtain the following theorem.
\begin{theorem}\label{thm5}
Let $F$ be the following system of polynomial equations
\begin{equation}
\sum_{\a\in\A}c_{\a}(\a-\C)\x^{\a}-\sum_{\b\in\B}d_{\b}(\b-\C)\x^{\b}=\mathbf{0},
\end{equation}
where $\A\cup\{\C\}\subseteq\N^n$, $c_{\a},d_{\b}>0$ and $\B\subseteq\Delta^{\circ}\cap\N^n$ with $\Delta=\Conv(\A\cup\{\C\})$. Assume that $\dim(\Delta)=n$, $\Delta$ is simple at some vertex and $\sum_{\a\in\A}c_{\a}\x^{\a}-\sum_{\b\in\B}d_{\b}\x^{\b}$ is not nonnegative over $\R_{+}^n$. Then $F$ has at least one positive real zero.
\end{theorem}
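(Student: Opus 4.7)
The plan is simply to split into cases based on which vertex of $\Delta$ witnesses the simplicity assumption, and then invoke Proposition \ref{thm1} or Proposition \ref{thm3} accordingly. Because $\Delta = \Conv(\A \cup \{\C\})$, the vertex set $V(\Delta)$ is contained in $\A \cup \{\C\}$, so the simple vertex promised by the hypothesis is either $\C$ itself or some $\a_0 \in \A$ distinct from $\C$.

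In the first case, $\C \in V(\Delta)$ automatically, and the remaining assumptions of Theorem \ref{thm5} --- namely $\dim(\Delta) = n$, $\B \subseteq \Delta^{\circ} \cap \N^n$, positivity of all $c_{\a}$ and $d_{\b}$, and the failure of $\sum_{\a} c_{\a}\x^{\a} - \sum_{\b} d_{\b}\x^{\b}$ to be nonnegative on $\R_{+}^n$ --- match the hypotheses of Proposition \ref{thm1} verbatim, which then hands us a positive real zero of $F$. In the second case, the simple vertex is some $\a_0 \in \A$ with $\a_0 \neq \C$; the hypotheses of Proposition \ref{thm3} are all satisfied and it produces a positive real zero of $F$. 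These two cases exhaust the possibilities, with harmless overlap whenever $\C \in \A$ and $\C$ itself happens to be a simple vertex (both propositions then apply and yield the same conclusion).

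Since the genuine analytical work --- coercivity via Theorem \ref{thm8}, the circuit-polynomial decomposition in Lemma \ref{lm5}, and the minimizer-to-zero passage using the scaled gradient identities --- has already been carried out in the proofs of Proposition \ref{thm1} and Proposition \ref{thm3}, there is no real obstacle here beyond confirming that the case split covers every configuration allowed by the hypothesis, which is immediate from $V(\Delta) \subseteq \A \cup \{\C\}$. Theorem \ref{thm5} is thereby exhibited as the union of the two propositions, matching the framing used in the sentence immediately preceding its statement.
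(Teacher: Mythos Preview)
Your proposal is correct and matches the paper's approach exactly: the paper does not even spell out a proof for Theorem~\ref{thm5}, prefacing it only with ``Combining Proposition~\ref{thm1} and Proposition~\ref{thm3}, we have'', and your case split on whether the simple vertex is $\C$ or some $\a_0\in\A\setminus\{\C\}$ is precisely that combination made explicit. The only nitpick is your parenthetical about ``both propositions then apply'' when $\C\in\A$ is a simple vertex---Proposition~\ref{thm3} still requires $\a_0\ne\C$, so strictly speaking only Proposition~\ref{thm1} fires there---but this does not affect the argument, since Case~1 already handles that situation.
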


\begin{lemma}\label{lm4}
Suppose $f_{d}=\sum_{\a\in\A}c_{\a}\x^{\a}-\sum_{\b\in\B}d_{\b}\x^{\b}-d\x^{\C}\in\R[\x]$, $\A\subseteq\N^n$, $c_{\a},d_{\b}>0$ such that $\B\cup\{\C\}\subseteq\Delta^{\circ}\cap\N^n$ with $\Delta=\Conv(\A)$. Assume that $\dim(\Delta)=n$, $\Delta$ is simple at some vertex $\a_0\in\A$ and $\sum_{\a\in\A}c_{\a}\x^{\a}-\sum_{\b\in\B}d_{\b}\x^{\b}$ is nonnegative over $\R_{+}^n$. Let $d^*:=\sup\{d\mid f_d\textrm{ is nonnegative over }\R_{+}^n\}$. Then $f_{d^*}$ has a positive real zero.
\end{lemma}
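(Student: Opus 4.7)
The strategy parallels that of Lemma \ref{lm5}, with the supremum over nonnegative $d$ and the ``interior perturbation'' $-d\x^{\C}$ replacing the infimum and the ``vertex perturbation'' $+d\x^{\C}$. First, $d^{*}$ is finite: the defining set contains $0$ by the nonnegativity hypothesis, and for any fixed $\x_{0}\in\R_{+}^{n}$, $f_{d}(\x_{0})\to -\infty$ as $d\to +\infty$, which bounds the set above. Continuity of $f_{d}(\x)$ in $d$ yields $f_{d^{*}}\ge 0$ on $\R_{+}^{n}$, so only the production of a positive real zero remains.

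Next, reduce to a genuine polynomial. Set $f_{d}':=f_{d}/\x^{\a_{0}}$. Since $\Delta$ is simple at $\a_{0}$, $\mathbf{0}$ is a simple vertex of $\New(f_{d}')$, so Lemma \ref{lm1} supplies $T\in\GL_{n}(\Q)$ with
\[
f_{d}'^{T}=c_{\a_{0}}+\sum_{i=1}^{n}c_{\a_{i}}x_{i}^{2k_{i}}+\sum_{\a\in\A\setminus(\A_{0}\cup\{\a_{0}\})}c_{\a}\x^{T(\a-\a_{0})}-\sum_{\b\in\B}d_{\b}\x^{T(\b-\a_{0})}-d\x^{T(\C-\a_{0})}\in\R[\x],
\]
where each $T(\b-\a_{0})$ and $T(\C-\a_{0})$ lies in $\New(f_{d}'^{T})^{\circ}\cap\N^{n}$. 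The crucial structural observation is that these interior exponents lie on \emph{no} face of $\New(f_{d}'^{T})$ missing the origin, so the set $D$ of Theorem \ref{thm8} consists entirely of exponents from $T(\A-\a_{0})$; all of those carry the positive coefficients $c_{\a}$. Consequently $f_{d}'^{T}$ is coercive on $\R^{n}$ \emph{for every value of $d$}, irrespective of the sign of the coefficient on $\x^{T(\C-\a_{0})}$.

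Now fix any $d_{0}>d^{*}$. Coercivity of $f_{d_{0}}'^{T}$ makes $K:=\{\x\in\R^{n}\colon f_{d_{0}}'^{T}(\x)\le 0\}$ compact. Pick $d_{k}\downarrow d^{*}$ with $d^{*}<d_{k}<d_{0}$; since $d_{k}>d^{*}$, Lemma \ref{lm2} supplies $\x_{k}\in\R_{+}^{n}$ with $f_{d_{k}}'^{T}(\x_{k})<0$, and the pointwise inequality $f_{d_{k}}'^{T}(\x)-f_{d_{0}}'^{T}(\x)=(d_{0}-d_{k})\x^{T(\C-\a_{0})}>0$ on $\R_{+}^{n}$ forces $\x_{k}\in K$. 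Extracting a convergent subsequence, $\x_{k}\to\x^{*}\in\R_{\ge 0}^{n}$. The identity $f_{d^{*}}'^{T}(\x_{k})=f_{d_{k}}'^{T}(\x_{k})+(d_{k}-d^{*})\x_{k}^{T(\C-\a_{0})}$, together with $f_{d^{*}}'^{T}(\x_{k})\ge 0$ and $f_{d_{k}}'^{T}(\x_{k})<0$, squeezes $f_{d^{*}}'^{T}(\x_{k})$ between $0$ and $(d_{k}-d^{*})\x_{k}^{T(\C-\a_{0})}\to 0$, yielding $f_{d^{*}}'^{T}(\x^{*})=0$.

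I expect the main obstacle to be ruling out $\x^{*}\in\partial\R_{+}^{n}$. This is exactly where the interior hypothesis on $\B\cup\{\C\}$ pays off: every exponent $T(\b-\a_{0})$ and $T(\C-\a_{0})$ lies in $\New(f_{d}'^{T})^{\circ}$, hence has all strictly positive coordinates (since $\mathbf{0}$ and the axis points $2k_{i}\be_{i}$ are vertices bounding the polytope against the coordinate hyperplanes). If any coordinate of $\x^{*}$ vanished, every subtracted monomial would vanish at $\x^{*}$ while $c_{\a_{0}}>0$ survives, giving $f_{d^{*}}'^{T}(\x^{*})\ge c_{\a_{0}}>0$, a contradiction. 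Thus $\x^{*}\in\R_{+}^{n}$, and Lemma \ref{lm2} followed by multiplication by $\x^{\a_{0}}$ transports $\x^{*}$ to the desired positive real zero of $f_{d^{*}}$.
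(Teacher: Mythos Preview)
Your argument is correct and follows the paper's proof closely through the reduction $f_d\mapsto f_d'=f_d/\x^{\a_0}\mapsto f_d'^T$ via Lemma~\ref{lm1}, the coercivity conclusion from Theorem~\ref{thm8}, and the construction of a bounded sequence $\x_k\in\R_+^n$ with $f_{d_k}'^T(\x_k)<0$ for $d_k\downarrow d^*$. The only substantive divergence is in the endgame. The paper, having established that the infimum of $f_{d^*}'^T$ over $\R_+^n$ is $0$, observes that $f_{d^*}'^T-c_{\a_0}$ is therefore not nonnegative on $\R_+^n$ and invokes Lemma~\ref{lm3} to produce a minimizer in $\R_+^n$, which is then the desired zero. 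You instead pass to a subsequential limit $\x^*\in\R_{\ge 0}^n$ directly and rule out $\x^*\in\partial\R_+^n$ by the observation that every negative-coefficient exponent $T(\b-\a_0)$, $T(\C-\a_0)$ lies in $\New(f_d'^T)^\circ\subseteq\R_{\ge 0}^n$ and hence has all coordinates strictly positive, so those monomials vanish on the boundary while the constant $c_{\a_0}$ does not. This is a clean self-contained finish that sidesteps the second appeal to coercivity hidden inside Lemma~\ref{lm3}; the paper's route, on the other hand, packages the boundary analysis into an already-proved lemma and is marginally shorter on the page.
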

\begin{proof}
It is clear that the set in the definition of $d^*$ is nonempty and has upper bounds. Hence $d^*$ exists. Let $f_{d}'=f_{d}/\x^{\a_0}=c_{\a_0}+\sum_{\a\in\A\backslash\{\a_0\}}c_{\a}\x^{\a-\a_0}-\sum_{\b\in\B}d_{\b}\x^{\b-\a_0}-d\x^{\C-\a_0}$. By Lemma \ref{lm1}, there exists $\A_0=\{\a_1,\ldots,\a_n\}\subseteq V(\A)\backslash\{\a_0\}$ and $T\in \GL_n(\Q)$ s.t. $f_{d}'^{T}=c_{\a_0}+\sum_{i=1}^nc_{\a_i}x_i^{2k_i}+\sum_{\a\in\A\backslash(\A_0\cup\{\a_0\})}c_{\a}$
$\x^{T(\a-\a_0)}-\sum_{\b\in\B}d_{\b}\x^{T(\b-\a_0)}-d\x^{T(\C-\a_0)}\in\R[\x]$, where $T(\a-\a_0)\in(2\N)^n$, $T(\b-\a_0),T(\C-\a_0)\in\New(f_{d}'^{T})^{\circ}\cap\N^n$. By Lemma \ref{lm2}, the nonnegativity of $f_d'^T$ over $\R_{+}^n$ is the same as the nonnegativity of $f_d'$ over $\R_{+}^n$, and hence is the same as the nonnegativity of $f_d$ over $\R_{+}^n$. Let $d_0>d^*$. By Theorem \ref{thm8}, $f_{d_0}'^{T}$ is a coercive polynomial. Hence there exists $N>0$ such that for $\left\|\x\right\|_{\infty}>N$, $f_{d_0}'^T(\x)>0$. For $d^*<d<d_0$, since $f_{d}'^{T}(\x)-f_{d_0}'^{T}(\x)=(d_0-d)\x^{T(\C-\a_0)}>0$ over $\R_{+}^n$, we have $f_{d}'^{T}(\x)>f_{d_0}'^{T}(\x)$ over $\R_{+}^n$. Thus for $\left\|\x\right\|_{\infty}>N$ and $\x\in\R_{+}^n$, $f_{d}'^T(\x)>0$. By the definition of $d^*$, $f_d'^T$ is not nonnegative over $\R_{+}^n$. That is to say, there exists $\x_d\in\R_{+}^n$ such that $f_d'^T(\x_d)<0$. It follows $\left\|\x_{d}\right\|_{\infty}\le N$. Let $d\rightarrow d^*$. Then we have $f_d'^T(\x_d)-f_{d^*}'^T(\x_d)=(d^*-d)\x_d^{T(\C-\a_0)}\rightarrow0$. Since $f_{d^*}'^T(\x_d)\ge0$ and $f_d'^T(\x_d)<0$, we must have $f_{d^*}'^T(\x_d)\rightarrow0$. Thus the infimum of $f_{d^*}'^T$ over $\R_{+}^n$ is $0$. It follows that $f_{d^*}'^T-c_{\a_0}$ is not nonnegative over $\R_{+}^n$. So by Lemma \ref{lm3}, $f_{d^*}'^T$ has a minimizer over $\R_{+}^n$, which is a positive real zero of $f_{d^*}'^T$. As a consequence, $f_{d^*}'$ also has a positive real zero by Lemma \ref{lm2} and so does $f_{d^*}$.
\end{proof}

\begin{theorem}\label{thm2}
Let $F$ be the following system of polynomial equations
\begin{equation}
\sum_{\a\in\A}c_{\a}(\a-\C)\x^{\a}-\sum_{\b\in\B}d_{\b}(\b-\C)\x^{\b}=\mathbf{0},
\end{equation}
where $\A\subseteq\N^n$, $c_{\a},d_{\b}>0$ and $\B\cup\{\C\}\subseteq\Delta^{\circ}\cap\N^n$ with $\Delta=\Conv(\A)$. Assume that $\dim(\Delta)=n$, $\Delta$ is simple at some vertex and $\sum_{\a\in\A}c_{\a}\x^{\a}-\sum_{\b\in\B}d_{\b}\x^{\b}$ is nonnegative over $\R_{+}^n$. Then $F$ has at least one positive real zero.
\end{theorem}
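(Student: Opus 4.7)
The plan is to mimic the strategy of Propositions \ref{thm1} and \ref{thm3}, with Lemma \ref{lm4} playing the role that Lemma \ref{lm5} played there. Lemma \ref{lm4} is tailored exactly to the present hypotheses—namely that $\C$ lies in $\Delta^{\circ}$ and that $\sum_{\a\in\A}c_{\a}\x^{\a}-\sum_{\b\in\B}d_{\b}\x^{\b}$ is already nonnegative on $\R_{+}^n$—so most of the analytic work is already done. The idea is to introduce the one-parameter family $f_d$ obtained by subtracting $d\x^{\C}$, push $d$ up to the boundary of the nonnegativity region, and observe that the boundary polynomial simultaneously has a positive real zero and is globally nonnegative on $\R_{+}^n$; combining this zero with the first-order optimality condition at it will reproduce the system $F$.

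Concretely I would set $f_d(\x)=\sum_{\a\in\A}c_{\a}\x^{\a}-\sum_{\b\in\B}d_{\b}\x^{\b}-d\x^{\C}$ and let $d^*:=\sup\{d: f_d\text{ is nonnegative over }\R_{+}^n\}$, as in Lemma \ref{lm4}. The hypothesis ensures that the defining set contains $0$, so $d^*\geq 0$ exists, and Lemma \ref{lm4} supplies $\x^*\in\R_{+}^n$ with $f_{d^*}(\x^*)=0$. Because $f_d(\x)$ is pointwise decreasing in $d$ on $\R_{+}^n$, $f_{d^*}$ is itself nonnegative on $\R_{+}^n$ (as a decreasing limit of nonnegative functions), so $\x^*$ is a global minimizer of $f_{d^*}$ on $\R_{+}^n$. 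Since $\x^*$ lies in the open orthant, first-order optimality forces $\nabla f_{d^*}(\x^*)=\mathbf{0}$.

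The closing step is the algebraic manipulation already used in Propositions \ref{thm1} and \ref{thm3}. Multiplying $f_{d^*}(\x^*)=0$ by the vector $\C$ gives
\[
\sum_{\a\in\A}c_{\a}\C(\x^*)^{\a}-\sum_{\b\in\B}d_{\b}\C(\x^*)^{\b}-d^*\C(\x^*)^{\C}=\mathbf{0},
\]
while multiplying the $i$-th coordinate of $\nabla f_{d^*}(\x^*)=\mathbf{0}$ by $x_i^*$ and assembling the resulting $n$ scalar equations into a single vector identity yields
\[
\sum_{\a\in\A}c_{\a}\a(\x^*)^{\a}-\sum_{\b\in\B}d_{\b}\b(\x^*)^{\b}-d^*\C(\x^*)^{\C}=\mathbf{0}.
\]
Subtracting the two identities cancels the $(\x^*)^{\C}$ term and recovers exactly $F$ at $\x^*$. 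The cancellation depends only on the coefficient of $\x^{\C}$ being $d^*\C$ on both sides, so the argument is structurally insensitive to whether $\C$ is a vertex or an interior point of $\Delta$.

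The genuinely analytic content is entirely concentrated in Lemma \ref{lm4}, which itself invokes the coercivity criterion of Theorem \ref{thm8} after the monomial change of coordinates from Lemma \ref{lm1}. The only point in the theorem proper that is not a mechanical transcription of the proof of Proposition \ref{thm3} is the justification that the positive real zero supplied by Lemma \ref{lm4} is in fact a global minimizer of $f_{d^*}$; this follows from the monotone pointwise-limit observation above. Once that is recorded, I do not anticipate any further obstacles.
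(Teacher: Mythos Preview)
Your proposal is correct and follows essentially the same route as the paper's own proof: introduce $f_d$, apply Lemma \ref{lm4} at $d^*$, then use $f_{d^*}(\x^*)=0$ together with $\nabla f_{d^*}(\x^*)=\mathbf{0}$ and the same subtraction trick to recover $F$. The only difference is cosmetic: you spell out via monotonicity in $d$ why $f_{d^*}$ is nonnegative on $\R_{+}^n$ (and hence why the zero from Lemma \ref{lm4} is a minimizer), whereas the paper simply reads this off from the proof of Lemma \ref{lm4}, where the infimum of $f_{d^*}'^T$ over $\R_{+}^n$ is shown to be $0$ and attained.
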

\begin{proof}
Consider the polynomial $f_d=\sum_{\a\in\A}c_{\a}\x^{\a}-\sum_{\b\in\B}d_{\b}\x^{\b}-d\x^{\C}$. Define $d^*$ as in Lemma \ref{lm4}. Then by Lemma \ref{lm4}, $f_{d^*}$ has a positive real zero which is also a minimizer of $f_{d^*}$ over $\R_{+}^n$. It implies that the system of $f_{d^*}=0$ and $\nabla(f_{d^*})=\mathbf{0}$ has a positive real zero. Multiplying $f_{d^*}=0$ by $\C$ yields
\begin{equation}\label{eq4}
\sum_{\a\in\A}c_{\a}\C\x^{\a}-\sum_{\b\in\B}d_{\b}\C\x^{\b}-d^*\C\x^{\C}=\mathbf{0}.
\end{equation}
Multiplying the $i$-th equation of $\nabla(f_{d^*})=\mathbf{0}$ by $x_i$ yields
\begin{equation}\label{eq3}
\sum_{\a\in\A}c_{\a}\a\x^{\a}-\sum_{\b\in\B}d_{\b}\b\x^{\b}-d^*\C\x^{\C}=\mathbf{0}.
\end{equation}
From (\ref{eq3})$-$(\ref{eq4}), we obtain
\begin{equation}
\sum_{\a\in\A}c_{\a}(\a-\C)\x^{\a}-\sum_{\b\in\B}d_{\b}(\b-\C)\x^{\b}=\mathbf{0},
\end{equation}
which is exactly $F$. Thus $F$ has a positive real zero.
\end{proof}

\begin{example}
Consider the following system of polynomial equations with $\A=\{\a_1,\a_2,\a_3,\a_4,\a_5\}=\{(8,8),(8,0),(0,8),(4,4),(0,0)\}$, $\B=\{\b\}=\{(3,2)\}$ and $\C=(1,4)$:
\begin{equation}\label{ex3}
\begin{cases}
7x^8y^8+7x^8-y^8+3x^4y^4-1-2x^3y^2=0\\
4x^8y^8-4x^8+4y^8-4+2x^3y^2=0
\end{cases}.
\end{equation}
\begin{center}
\begin{tikzpicture}
\draw (0,0)--(0,2);
\draw (0,0)--(2,0);
\draw (2,0)--(2,2);
\draw (0,2)--(2,2);
\draw (0,0)--(2,2);
\draw (0,2)--(2,0);
\fill (0,0) circle (2pt);
\node[below left] (1) at (0,0) {$\a_5$};
\fill (2,0) circle (2pt);
\node[below right] (2) at (2,0) {$\a_2$};
\fill (0,2) circle (2pt);
\node[above left] (3) at (0,2) {$\a_3$};
\fill (2,2) circle (2pt);
\node[above right] (4) at (2,2) {$\a_1$};
\fill (1,1) circle (2pt);
\node[right] (7) at (1,1) {$\a_4$};
\fill (0.19,0.94) rectangle (0.31,1.06);
\node[right] (5) at (0.25,1) {$\C$};
\draw (0.75,0.5) circle (2pt);
\node[right] (6) at (0.75,0.5) {$\b$};
\end{tikzpicture}
\end{center}
The polynomial $x^8y^8+x^8+y^8+x^4y^4+1-x^3y^2$ is nonnegative over $\R_{+}^n$ and hence by Theorem \ref{thm2}, the system (\ref{ex3}) has at least one positive real zero.
In other words, the lower bound for the number of positive real zero of (\ref{ex3}) is one. Actually, a computation by {\tt Mathematica} yields exactly one positive real zero of (\ref{ex3}): $$(0.778814, 0.972957).$$
\end{example}

Combining Theorem \ref{thm5} with Theorem \ref{thm2}, we obtain the following theorem.
\begin{theorem}\label{thm7}
Let $F$ be the following system of polynomial equations
\begin{equation}
\sum_{\a\in\A}c_{\a}(\a-\C)\x^{\a}-\sum_{\b\in\B}d_{\b}(\b-\C)\x^{\b}=\mathbf{0},
\end{equation}
where $\A\subseteq\N^n$, $c_{\a},d_{\b}>0$ and $\B\cup\{\C\}\subseteq\Delta^{\circ}\cap\N^n$ with $\Delta=\Conv(\A)$. Assume that $\dim(\Delta)=n$ and $\Delta$ is simple at some vertex. Then $F$ has at least one positive real zero.
\end{theorem}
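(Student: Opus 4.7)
The plan is to observe that Theorem \ref{thm7} is nothing more than the disjunction of Theorems \ref{thm5} and \ref{thm2} applied to the auxiliary polynomial $g(\x):=\sum_{\a\in\A}c_{\a}\x^{\a}-\sum_{\b\in\B}d_{\b}\x^{\b}$. Any such $g$ is either nonnegative over $\R_+^n$ or not; I would therefore simply split on this dichotomy and invoke one of the two preceding theorems accordingly.

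Before splitting, I would check that the remaining hypotheses of each of Theorems \ref{thm5} and \ref{thm2} are implied by those of Theorem \ref{thm7}. This is straightforward: in Theorem \ref{thm7} we have $\C\in\Delta^\circ$ with $\Delta=\Conv(\A)$, so $\C\in\Conv(\A)$ and hence $\Conv(\A\cup\{\C\})=\Conv(\A)$. The polytope $\Delta$ appearing in the hypothesis of Theorem \ref{thm5} therefore coincides with the one defined in Theorem \ref{thm7}, and the inclusion $\B\subseteq\Delta^\circ\cap\N^n$ (which is shared between all three statements), the full-dimensionality $\dim(\Delta)=n$, and the simplicity of $\Delta$ at some vertex likewise transfer directly. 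With this reconciliation in hand, the case split is immediate: if $g$ fails to be nonnegative over $\R_+^n$, Theorem \ref{thm5} delivers a positive real zero of $F$, while if $g$ is nonnegative over $\R_+^n$, Theorem \ref{thm2} does the same.

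No real obstacle is expected; the proof is purely an assembly step, and all the analytic work (coercivity, the limiting argument for the critical parameter $d^*$, the Birch-type reduction via $\nabla f=\mathbf{0}$) has already been done inside Theorems \ref{thm5} and \ref{thm2}. The only mildly delicate point is the one noted above, namely identifying the two descriptions of $\Delta$ so that Theorem \ref{thm5} can be legitimately invoked in the present setting where $\C$ lies in the interior rather than being added as a new vertex.
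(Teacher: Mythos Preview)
Your proposal is correct and matches the paper's own approach: the paper simply states Theorem~\ref{thm7} with the preamble ``Combining Theorem~\ref{thm5} and Theorem~\ref{thm2}, we have'' and gives no further proof. Your verification that $\Conv(\A\cup\{\C\})=\Conv(\A)$ when $\C\in\Delta^{\circ}$, so that the hypotheses of Theorem~\ref{thm5} are indeed met in the non-nonnegative case, is the only point that needs checking and you have handled it correctly.
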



\begin{remark}\label{re}
A version of Birch's theorem (\cite{cr,mu}) in statistics states that the following system of polynomial equations
\begin{equation}\label{sec1-eq2}
\sum_{\a\in\A} c_{\a}(\a-\C)\x^{\a}=\mathbf{0},
\end{equation}
where $\A\subseteq\N^n$, $c_{\a}>0$, $\C\in\Conv(\A)^{\circ}\cap\N^n$, $\dim(\Conv(\A))=n$, has exactly one positive real zero. Our theorems hence can be viewed as a partial generalization of Birch's theorem.
\end{remark}

\begin{remark}
In the above theorems, we always assume that the Newton polytope $\Delta$ is simple at some vertex since we need to exploit the property of coercive polynomials in the proofs. It is not clear whether this condition can be dropped.
\end{remark}

As an application, we finally give an example from chemical reaction networks to illustrate our theorems.
\begin{example}
Consider the following reaction network consisting of species $A,B$ and reactions:
\begin{align*}
6A&\xrightarrow{r_1}4A+3B\\
6B&\xrightarrow{r_2}4A+3B\\
3A+5B&\xrightarrow{r_3}4A+3B\\
3A+4B&\xrightarrow{r_4}2A+5B
\end{align*}
with reaction rate constants $r_1,r_2,r_3,r_4>0$. We denote by $x_A,x_B$ the concentrations of the species $A,B$ respectively. Under the assumption of mass-action kinetics, we describe how these concentrations change in time by following system of ODEs:
\begin{equation}\label{ex4}
\begin{cases}
\dot{x}_A=2r_1x^6-4r_2y^6-r_3x^3y^5+r_4x^3y^4\\
\dot{x}_B=-3r_1x^6+3r_2y^6+2r_3x^3y^5-r_4x^3y^4
\end{cases}.
\end{equation}
A {\em positive steady state} of \eqref{ex4} is a concentration-vector $(x^*_A,x^*_B)\in\R_+^2$ at which the right-hand side of the ODEs \eqref{ex4} vanishes.
One can easily check that the system of polynomials in \eqref{ex4} satisfies the conditions of Theorem \ref{thm7} with $\A=\{\a_1,\a_2,\a_3\}=\{(6,0),(0,6),(3,5)\}$, $\B=\{\b\}=\{(3,4)\}$ and $\C=(4,3)$.
\begin{center}
\begin{tikzpicture}
\draw (3,0)--(0,3);
\draw (3,0)--(1.5,2.5);
\draw (1.5,2.5)--(0,3);
\fill (3,0) circle (2pt);
\node[below left] (1) at (3,0) {$\a_1$};
\fill (0,3) circle (2pt);
\node[below left] (2) at (0,3) {$\a_2$};
\fill (1.5,2.5) circle (2pt);
\node[above right] (3) at (1.5,2.5) {$\a_3$};
\fill (1.94,1.44) rectangle (2.06,1.56);
\node[above left] (5) at (2,1.5) {$\C$};
\draw (1.5,2) circle (2pt);
\node[above left] (6) at (1.5,2) {$\b$};
\end{tikzpicture}
\end{center}
Therefore, Theorem \ref{thm7} enables us to give a lower bound, i.e. one, for the number of positive steady states of \eqref{ex4}. Actually, a computation by {\tt Mathematica} yields exactly one positive steady state of \eqref{ex4}: $$(1.75103, 1.49382).$$
\end{example}

\section{Conclusions}
In this paper, sufficient conditions for certain systems of multivariate polynomials admitting at least one positive real zero are given for the first time. These sufficient conditions are expressed in terms of Newton polytopes and their combinatorial structure. It is possible to find applications in polynomial optimization and chemical reaction networks. The further goal is to give upper and lower bounds for the number of positive real zeros to more general systems of multivariate polynomial equations, i.e., a multivariate version of Descartes' rule of signs. We hope the main results of this paper could shed some light on this difficulty problem.

\bibliographystyle{amsplain}

\begin{thebibliography}{99}
\bibitem{ba}
T. Bajbar and O. Stein, Coercive Polynomials and Their Newton Polytopes, {\em SIAM J. Optim.}, 25(3)(2014):1542-1570.

\bibitem{bi}
F. Bihan, Polynomial systems supported on circuits and dessins d'enfant, {\em J. Lond. Math. Soc.}, 75(1)(2007):116-132.

\bibitem{bi2}
F. Bihan and A. Dickenstein, Descartes' Rule of Signs for Polynomial Systems Supported on Circuits, {\em Int. Math. Res. Not.}, 22(2017):6867-6893.

\bibitem{cr}
G. Craciun, A. Dickenstein, A. Shiu, et al., Toric dynamical systems, {\em J. Symbolic Comput.}, 44(11)(2009):1551-1565.

\bibitem{iw}
S. Iliman and T. de Wolff, Amoebas, nonnegative polynomials and sums of squares supported on circuits, {\em Res. Math. Sci.}, 3(1)(2016):9.

\bibitem{it}
I. Itenberg and M.F. Roy, Multivariate Descartes' Rule, IRMAR, Universit'e de Rennes I, 1996:337-346.

\bibitem{jo}
B. Joshi and A. Shiu, A Survey of Methods for Deciding Whether a Reaction Network is Multistationary, {\em Math. Model. Nat. Phenom.}, 10(5)(2015):47-67.

\bibitem{mu}
S. M\"uller, E. Feliu, et al., Sign Conditions for Injectivity of Generalized Polynomial Maps with Applications to Chemical Reaction Networks and Real Algebraic Geometry, {\em Found. Comput. Math.}, 16(2016):69-97.

\bibitem{la}
J.C. Lagarias and T.J. Richardson, Multivariate Descartes' rule of signs and Sturmfels's challenge problem, {\em Math. Intelligencer}, 19(3)(1997):9-15.

\bibitem{li}
T.Y. Li and X. Wang, On multivariate Descartes' rule - a counterexample, {\em Beitr. Algebra Geom.}, 39(1)(1998):1-5.

\bibitem{nie}
J. Nie, J. Demmel and B. Sturmfels, Minimizing polynomials via sum of squares over the gradient ideal, {\em Math. Program.}, 106(2006):587-606.

\bibitem{re}
B. Reznick, Extremal PSD forms with few terms, {\em Duke Math. J.}, 45(1978):363-374.

\bibitem{sc}
M. Schweighofer, Global Optimization of Polynomials Using Gradient Tentacles and Sums of Squares, {\em SIAM J. Optim.}, 17(3)(2006):920-942.

\bibitem{so}
F. Sottile, {\em Real solutions to equations from geometry}, AMS, University Lecture Series(57), 2011.

\bibitem{wang}
J. Wang, Nonnegative Polynomials and Circuit Polynomials, arXiv:1804.09455, 2018.

\end{thebibliography}

\end{document}